\documentclass[12pt,oneside,a4paper]{amsart}
\pdfoutput=1
\usepackage{amssymb}
\usepackage{hyperref}

\usepackage{mathtools}
\mathtoolsset{showonlyrefs}

\usepackage[english]{babel}

\theoremstyle{plain}
\newtheorem{theorem}{Theorem}[section]
\newtheorem{lemma}[theorem]{Lemma}

\newtheorem{proposition}[theorem]{Proposition}
\theoremstyle{remark}
\newtheorem{remark}[theorem]{Remark}

\newcommand{\R}{\mathbb{R}}
\newcommand{\Z}{\mathbb{Z}}
\newcommand{\C}{\mathbb{C}}
\newcommand{\Q}{\mathbb{Q}}
\newcommand{\F}{\mathbb{F}}
\newcommand{\K}{\mathbb{K}}
\newcommand{\abs}[1]{\left| #1 \right|}
\DeclareMathOperator{\tr}{tr}
\DeclareMathOperator{\Aut}{Aut}
\newcommand{\rt}{R}
\newcommand{\mrt}{\bar R}

\title{On Radon transforms on finite groups}
\author{Joonas Ilmavirta}
\address{Department of Mathematics and Statistics, University of Jyv\"askyl\"a, P.O.Box 35 (MaD) FI-40014 University of Jyv\"askyl\"a, Finland}
\email{joonas.ilmavirta@jyu.fi}
\date{\today}

\begin{document}

%
%
%

\begin{abstract}
If $G$ is a finite group, is a function $f:G\to\mathbb C$ determined by its sums over all cosets of cyclic subgroups of $G$? In other words, is the Radon transform on $G$ injective? This inverse problem is a discrete analogue of asking whether a function on a compact Lie group is determined by its integrals over all geodesics. We discuss what makes this new discrete inverse problem analogous to well-studied inverse problems on manifolds and we also present some alternative definitions. We use representation theory to prove that the Radon transform fails to be injective precisely on Frobenius complements. We also give easy-to-check sufficient conditions for injectivity and noninjectivity for the Radon transform, including a complete answer for abelian groups and several examples for nonabelian ones.
\end{abstract}

\keywords{Inverse problems, finite groups, representation theory, Radon transforms, Frobenius complements}

\subjclass[2010]{20F99, 44A99, 44A12, 20C99, 20B10}

\maketitle

\section{Introduction}

Before introducing the problem we wish to study, let us begin with an older problem.
Given a closed Riemannian manifold~$M$ and a smooth function $f:M\to\C$, can we recover the function~$f$ if we know its integral over all closed geodesics?
This problem was first considered by Funk~\cite{F:S2-radon} in 1913 in the case $M=S^2$ -- it turned out that one can only recover the even part of the function.
This problem has since received attention from several authors, and results have been proven on essentially two types of manifolds: those with algebraic structure (symmetric spaces, Grassmannians and Lie groups~\cite{I:torus,G:grassmann,G:lie-radon,GJ:radon-symm-space,I:lie-radon}) and those with negative curvature or some other similar structure (see for example~\cite{DS:anosov}).

This problem was recently considered on compact Lie groups~\cite{I:lie-radon} and the aim of the present paper is to extend that approach to finite groups.
We ask whether a function $f:G\to\C$ on a finite group~$G$ can be determined from its integrals over all closed geodesics.
In other words, we ask whether the Radon transform that sends a function to its integrals over all closed geodesics is injective.

The first problem is to make sense of the question.
The integral is, of course, a sum, but the definition of a geodesic is less obvious.
We define a closed geodesic to be a translate of a cyclic subgroup; as it turns out, this is in many ways analogous closed geodesics on a Lie group (see section~\ref{sec:comparison}).
A more careful formulation of the problem is given in section~\ref{sec:description}.


\subsection{Summary of results}
\label{sec:summary}

In the case of finite abelian groups, we give a complete answer to our problem.
The Radon transform is injective on a finite abelian group if and only if it is not cyclic (see theorem~\ref{thm:abelian}).
It follows, for example, that the Radon transform is not injective on any finite abelian group of squarefree order.

We also give a complete answer for product groups.
The Radon transform fails to be injective on a product if and only if the constituents have noninjective Radon transform and coprime order (see theorem~\ref{thm:prod}).

Our answers in other cases are far less complete using elementary methods.
We show that the Radon transform is injective on all symmetric, alternating and dihedral groups, but noninjective on all dicyclic groups.
We also give sufficient and necessary conditions for the Radon transform to be injective, but none of our conditions is both sufficient and necessary.
For these results, see section~\ref{sec:nonabelian}.

We give a representation theoretical characterization of the finite groups where the Radon transform is injective (see theorem~\ref{thm:rep-radon-inj}).
It turns out that the Radon transform fails to be injective on~$G$ if and only if~$G$ is a Frobenius complement.
(This is true whenever the functions on which the Radon transform operates take values in a field of characteristic zero, not necessarily complex numbers.)
This is equivalent with the existence of a fixed-point-free representation in a suitable sense; see theorem~\ref{thm:rep-radon-inj}.
Tools from representation theory also allow us to give a second proof of the result for abelian groups and a characterization of the kernel of the Radon transform when it is not injective (see theorem~\ref{thm:rep-radon-ker}).

Many of the other results follow from this characterization and known properties of Frobenius complements.
We however prefer to give more elementary proofs of our results -- in particular proofs written in the language of Radon transforms.
The characterization result is also a new characterization of Frobenius complements.
We will discuss this matter again in remark~\ref{rmk:frobenius} and under it.

For any given finite group it is a straightforward but tedious exercise to see if the Radon transform is injective (see section~\ref{sec:observations}).
We do not consider this a satisfactory solution to our problem.
Our results, besides providing some understanding of the problem at hand, also significantly speed up the aforementioned banal exercise.


We will also discuss some generalizations and variations of the problem in section~\ref{sec:other}.
We present some remarks about the Radon transform on infinite discrete groups in section~\ref{sec:Z} and we introduce the discrete geodesic flow in section~\ref{sec:flow}.
In section~\ref{sec:var} we will introduce a variant of the Radon transform where geodesics are assumed to satisfy an additional maximality condition and study its basic properties.


\section{Description of the problem}
\label{sec:description}

\subsection{Definition of geodesics}
\label{sec:def-geod}

Let~$G$ be a finite group.
For any integer $n\geq2$ let~$C_n$ denote the cyclic group of order~$n$.
A geodesic of length~$n$ on the group~$G$ is a mapping $C_n\ni t\mapsto x\gamma(t)\in G$, where $x\in G$ and $\gamma:C_n\to G$ is a nontrivial homomorphism.
The variable $t\in C_n$ is the discrete analogue of time (or arc length).
We will often ignore length and refer only to geodesics, by which we mean geodesics of all possible lengths.
We denote by~$\Gamma_n$ the set of all nontrivial homomorphisms $C_n\to G$.

Note that the definition of a geodesic is symmetric between left and right multiplication, although the definition is given only with left translations (by the element~$x$).
If~$\gamma$ is a nontrivial homomorphism, so is $t\mapsto x^{-1}\gamma(t)x$.

It is common to identify a curve with its image.
Similarly, geodesics in our setting can be thought of as mappings to~$G$ or as subsets of~$G$.
With this identification we could redefine geodesics to be cosets of cyclic subgroups that contain more than one point.
Note that the length of a geodesic and the number of points in its image may not coincide, but in this case each point in the image has equally large preimage.

We will show that only geodesics whose length is a prime number matter.
In this case identifying the mapping and the point is safe, since a homomorphism $C_p\to G$ is nontrivial if and only if it is injective if~$p$ is a prime.
Also, any subgroup of prime order is automatically cyclic, which would allow for an even shorted definition, but at the cost of diminished clarity.

If the group~$G$ were infinite, there could also be infinitely long geodesics, corresponding to homomorphisms $\Z\to G$.
We restrict ourselves to finite groups for simplicity.
The Radon transform in infinite discrete groups is discussed briefly in section~\ref{sec:Z}, and it turns out that our inverse problem can behave very differently on infinite groups.

Actually, on finite groups every geodesic corresponds to a homomorphism from~$\Z$, as such a homomorphism must factor through some finite cyclic group.
(Every geodesic on a finite group is necessarily periodic.)
But it is technically convenient to restrict attention to homomorphisms from finite cyclic groups.

This is not the only reasonable definition of a geodesic in a finite group.
For an alternative, see section~\ref{sec:var}.

Geodesics on finite groups can also be thought of as trajectories of a discrete time dynamical system.
See section~\ref{sec:flow} for this aspect.

\subsection{Comparison of discrete and continuous geodesics}
\label{sec:comparison}

There are several reasons to see these as natural generalizations of geodesics on Lie groups (with a bi-invariant metric).
From now on, we assume all Lie groups to have dimension one or higher, so that finite groups are not Lie groups.
This is not essential, but makes it easier to distinguish the two cases.
By geodesics on Lie groups we mean only the periodic ones here.

For every nontrivial group there exist geodesics.
---
The same is true for compact Lie groups of nonzero dimension.

Geodesics remain geodesics under left and right translations. Left translations were built into the definition, and right invariance is due to the fact that if $\gamma:C_n\to G$ is a homomorphism, so is the map $\gamma_x:C_n\to G$ with $\gamma_x(t)=x^{-1}\gamma(t)x$.
---
The same is true for compact Lie groups.

If the finite group~$G$ is abelian, it is a product of cyclic groups.
---
If a connected, compact Lie group is abelian, it is a product of copies of~$S^1$ (a torus, that is).

Subgroups are totally geodesic; that is, if $\gamma:C_n\to H$ is a geodesic on a subgroup $H<G$, then it is also a geodesic on~$G$.
---
The same is true for compact Lie groups.

The groups~$C_n$ are abelian and once a generator has been fixed, there is a natural meaning to going forward or backward, and there are no other directions.
---
A similar description fits~$S^1$.

A geodesic is uniquely determined by giving two adjacent points\footnote{Two elements~$t$ and~$s$ of a cyclic group can be naturally considered adjacent if $ts^{-1}$ generates the group.} on it.
---
A geodesic on a Lie group is uniquely determined by a point and a direction (although it need not be closed).

The one property of geodesics that our generalization does not capture is minimization of length.
This was only expected, as distance is much less of a meaningful concept on a finite group.
There are studies of norms on finite groups~\cite{B:finite-group-norm}, but the author is unaware of a pre-existing theory of geodesics.

\subsection{The Radon transform on finite groups}

Let~$G$ again be a finite group and let~$A^\C$ denote the set of complex valued functions on a set~$A$.
Recall that~$\Gamma_n$ denotes the set of nontrivial homomorphisms $C_n\to G$.
For $f\in G^\C$ and $\gamma\in\Gamma_n$, $n\geq2$, we denote
\begin{equation}
\rt_nf(x,\varphi)
=
\sum_{t\in C_n}f(x\gamma(t)).
\end{equation}
This is the sum of~$f$ over the geodesic defined by the homomorphism~$\gamma$ and shifted by~$x$.
The mapping $\rt_n:G^\C\to (G\times\Gamma_n)^\C$ is analogous to the Radon transform on Lie groups.
Since it is a discrete analogue of an integral transform, we find it natural to describe it as a sum transform.

A similar definition of the Radon transform on Lie groups was given in~\cite{I:lie-radon}.
To make the similarity to the classical Radon transform on~$\R^n$ even more transparent, we remark that every straight line in~$\R^n$ is a translate of a nontrivial Lie homomorphism $\R\to\R^n$.

We want to consider geodesics of all lengths at once, so we will denote by~$\rt f$ the sequence $(\rt_nf)_{n=2}^\infty$ of all of the sum transforms~$\rt_n$.
Therefore when we say that~$\rt$ is injective, we mean that~$f$ can be recovered if we know~$\rt_nf$ for all~$n$.
By the kernel of~$\rt$ we mean $\ker\rt=\bigcap_{n=1}^\infty\ker\rt_n\subset\C^G$, and injectivity is equivalent with $\ker\rt=0$.

We call~$\rt$ the (discrete) Radon transform on the group~$G$.
Our main problem is to find out when this transform is injective.

By definition~$\rt$ consists of infinitely many sum transforms, but as we will see in lemma~\ref{lma:prime}, it suffices to consider only those~$n$ that are prime factors of~$\abs{G}$.

It is no way crucial that we assume the functions to take values in~$\C$.
Our results remain intact when~$\C$ is replaced with any vector space over a field of characteristic zero.
Working over the complex numbers becomes technically very useful in section~\ref{sec:rep}, where we use representation theory.

It is obvious that if the Radon transform is injective over a field, then it is also injective over any subfield.
The converse is also true, but less obvious.
Let~$\F$ be a subfield of~$\K$ and let~$\rt^\F$ and~$\rt^\K$ be the corresponding Radon transforms on~$G$.
We can regard~$\K$ as a vector space over~$\F$, and we can use its dual space~$\K^*$ to deduce injectivity over~$\K$ from injectivity over~$\F$.
If $f\in\K^G$ and $u\in\K^*$, we have $u(\rt^\K f)=\rt^\F u(f)$.
If~$\rt^\F$ is injective, then $\rt^\K f=0$ implies that $u(f)=0$ for all $u\in\K^*$ and therefore that $f=0$.

This argument for field extensions also works with other vector spaces.
If we want to study the Radon transforms of functions taking values in a vector space, it suffices to consider the problem with functions that take values in the underlying field or any subfield.
Therefore our results (which are stated over~$\C$) are true over any vector space over any field of characteristic zero.
The Radon transform itself makes sense if the functions take values in any abelian group.


The classical Radon transform has also been considered on noncompact manifolds.
The Radon transform in Euclidean spaces has applications in medical and other imaging and has been widely studied.
In the Radon transform one typically integrates over hyperplanes and in the X-ray transform over lines, but we refer to all integral transforms of this kind as Radon transforms for simplicity.
For an introduction to the classical Radon transform, see~\cite{book-helgason,book-natterer}.

\subsection{Comparison of discrete and continuous Radon transforms}

In section~\ref{sec:comparison} we compared geodesics in the discrete and continuous settings and highlighted the similarities between the two theories.

There are also differences between the continuous and discrete theories, and this is reflected in the differences in injectivity results for the Radon transform.
The analogous study of the Radon transform on Lie groups was based on finding toric subgroups and reducing the problem to the abelian case.
There are only four compact, connected Lie groups that do not have a two dimensional torus as a subgroup: the trivial group,~$SO(2)$, $SO(3)$ and~$SU(2)$.

The analogous approach does not work with finite groups.
The discrete analogue of an at least two dimensional toric subgroup is a non-cyclic abelian subgroup.
Unfortunately there is no simple description of groups with such subgroups.
In particular there are infinitely many groups without such subgroups.

The most immediate analogue of a maximal torus would be a maximal abelian subgroup, but two such subgroups are not necessarily conjugate.
In this sense a better analogue is given by Sylow $p$\mbox{-}subgroups, since any two Sylow $p$\mbox{-}subgroups are conjugate just like any two maximal tori, but Sylow $p$\mbox{-}subgroups need not be abelian.
The existence of convenient subgroups is a much more complicated issue in finite groups.

Another difference lies in the internal symmetries of geodesics.
As mentioned in section~\ref{sec:comparison}, a discrete geodesic has a well defined direction once a generator is chosen in the cyclic group.
There are many possible choices, corresponding to automorphisms of the cyclic group.
There is no natural generator for the cyclic subgroup corresponding to a geodesic, so there is no ``natural direction'' of discrete geodesics.
The group~$S^1$ that underlies continuous geodesics has only two automorphisms -- the nontrivial one is the reversal -- and the only ambiguity is in choosing one of the two directions.

These structural differences between discrete and continuous geometry of groups leads us to a different course of action than in~\cite{I:lie-radon}.

\subsection{Some simple observations}
\label{sec:observations}

Let us begin our enquiry by presenting some first observations.

Given any single finite group~$G$ it is a straightforward task to see whether the Radon transform is injective or not.
There are finitely many points and geodesics on~$G$, so the Radon transform is a linear operator between finite dimensional spaces.
Checking injectivity therefore reduces to forming a matrix and calculating its rank.


Even in more general situations, dimension counting arguments suffice for proving that the Radon transform has a nontrivial kernel.
More elaborate arguments are needed to prove that the transform is injective, if that is the case.

Suppose we know the sum of $f:G\to\C$ over all geodesics and want to reconstruct~$f$.
It suffices to reconstruct~$f$ at~$e$.
Indeed, let $\tau_xg:G\to\C$, $\tau_xf(y)=f(xy)$, be the left translation of~$f$ by $x\in G$ and let~$S$ be the assumedly existing solution operator that takes $\rt f\mapsto f(e)$.
Since geodesics are translation invariant, we know $\rt\tau_xf$ as well.
Then $S(\rt\tau_xf)=f(x)$.

Besides sums over geodesics, we shall also be occasionally interested in sums over the whole group.
If $\gamma:C_n\to G$ is a homomorphism, then
\begin{equation}
\label{eq:group-sum}
\sum_{x\in G}f(x)
=
\frac{1}{n}\sum_{x\in G}\rt_nf(x,\gamma).
\end{equation}
The Radon transform thus determines the average of the function.

The following lemma demonstrates that we only need to consider geodesics of prime length.
Therefore, if we know all subgroups of prime order, we immediately know all relevant geodesics.

\begin{lemma}
\label{lma:prime}
Let~$G$ be a finite group.
The Radon transforms~$\rt_nf$ of $f:G\to\C$ is uniquely determined for each $n\geq2$ by the knowledge of~$\rt_pf$ for every prime~$p$.
\end{lemma}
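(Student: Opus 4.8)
The plan is to reduce the sum defining an arbitrary geodesic to sums over geodesics of prime length by decomposing the image of the homomorphism into cosets of a subgroup of prime order. First I would record the basic multiplicity observation: for any nontrivial homomorphism $\gamma\colon C_n\to G$ the image $H=\gamma(C_n)$ is a cyclic subgroup of some order $d$ with $1<d\mid n$, and because $\gamma$ is a homomorphism every element of $H$ has a fibre of the same cardinality $\abs{\ker\gamma}=n/d$. Consequently the defining sum collapses to a genuine subgroup sum,
\begin{equation}
\rt_nf(x,\gamma)=\frac{n}{d}\sum_{h\in H}f(xh),
\end{equation}
so the whole problem is reduced to understanding the coset sum $\sum_{h\in H}f(xh)$ over a cyclic group $H$ of order $d>1$.

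The key step is then to break this subgroup sum into prime pieces. Since $d>1$ it has a prime divisor $p$, and the cyclic group $H$ contains a subgroup $K$ of order $p$ (unique, because $H$ is cyclic). Choosing coset representatives $h_1,\dots,h_{d/p}$ for $K$ in $H$ gives a partition $H=\bigsqcup_{j}h_jK$, whence
\begin{equation}
\sum_{h\in H}f(xh)=\sum_{j=1}^{d/p}\sum_{k\in K}f(xh_jk)=\sum_{j=1}^{d/p}\rt_pf(xh_j,\iota),
\end{equation}
where $\iota\colon C_p\to G$ is any injective homomorphism with image $K$; injectivity is automatic for a nontrivial homomorphism out of $C_p$ with $p$ prime, so $\iota\in\Gamma_p$. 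Combining the two displays expresses $\rt_nf(x,\gamma)$ as an explicit finite linear combination of values of $\rt_pf$ at the shifted base points $xh_j$, all of which are available once $\rt_pf$ is known. This proves the lemma, and I note in passing that $p\mid d\mid n$, so that only primes dividing $n$, hence by Lagrange dividing $\abs{G}$, ever occur.

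The step I expect to require the most care is the multiplicity bookkeeping in the first display: when $\gamma$ fails to be injective one must check that the preimages of the points of $H$ all have the same size $n/d$ and carry the resulting factor $n/d$ correctly through the computation. The only other point to watch is the precise meaning of ``known'': knowing $\rt_pf$ means knowing the function $(y,\varphi)\mapsto\rt_pf(y,\varphi)$ at every base point $y\in G$ and every $\varphi\in\Gamma_p$, so in particular the values $\rt_pf(xh_j,\iota)$ appearing above are indeed determined. Everything else is an elementary coset computation.
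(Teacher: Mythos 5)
Your proof is correct, but it is organized differently from the paper's. The paper fixes a factorization $n=km$ and shows that $\rt_nf$ is determined by $\rt_kf$ and $\rt_mf$, splitting into two cases according to whether the composition $\gamma\circ\alpha$ with the embedding $\alpha:C_m\to C_n$ is trivial (in which case $\gamma$ factors through $C_k$ and one picks up a multiplicity $m$) or nontrivial (in which case one averages $\rt_mf$ over translates); this is implicitly a recursion on the number of prime factors of $n$. You instead collapse the geodesic sum in one step to an honest subgroup sum $\frac{n}{d}\sum_{h\in H}f(xh)$ over the image $H=\gamma(C_n)$, and then decompose $H$ into cosets of its unique subgroup of prime order $p$, arriving at the closed formula $\rt_nf(x,\gamma)=\frac{n}{d}\sum_{j=1}^{d/p}\rt_pf(xh_j,\iota)$. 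The two case distinctions of the paper reappear in your argument as the two separate steps (multiplicity bookkeeping, then coset decomposition), but your version avoids the induction entirely, shows that a \emph{single} prime $p$ dividing $\abs{H}$ suffices for each geodesic, and makes the linear dependence of $\rt_nf$ on $\rt_pf$ completely explicit; the paper's version is more incremental but requires no choice of coset representatives. One tiny quibble with your closing aside: the right reason the relevant primes divide $\abs{G}$ is that $p\mid d=\abs{H}$ and $H\leq G$, not that $n$ itself divides $\abs{G}$ (it need not, since $\gamma$ can be non-injective).
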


\begin{proof}
Consider the Radon transform~$\rt_nf$ for some composite number $n=km$.
It suffices to prove that~$\rt_nf$ is determined by~$\rt_kf$ and~$\rt_mf$.
Take any nontrivial homomorphism $\gamma:C_n\to G$.
Let $\alpha:C_m\to C_n$ be the embedding that multiplies by~$k$ (where we have identified $C_m=\Z/m\Z$ and similarly for~$C_n$ and~$C_k$).

Now $\gamma\circ\alpha:C_m\to G$ is a homomorphism.
If it is trivial, then $\gamma(k)=e$ and we may define a homomorphism $\beta:C_k\to G$ by letting $\beta(t)=\gamma(t)$.
Then $\rt_nf(x,\gamma)=m\rt_kf(x,\beta)$.

If $\gamma\circ\alpha$ is not trivial, then $\rt_nf(x,\gamma)=\frac{1}{m}\sum_{t\in C_n}\rt_mf(x\gamma(t),\gamma\circ\alpha)$.
In either case $\rt_nf(x,\gamma)$ is determined by~$\rt_kf$ and~$\rt_mf$.
\end{proof}

\begin{lemma}
\label{lma:subgroup}
Let~$G$ be a finite group.
If the Radon transform is injective on any subgroup $H<G$, then it is injective on~$G$ as well.
\end{lemma}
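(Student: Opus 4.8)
The plan is to prove the statement directly: assuming the Radon transform on some subgroup $H<G$ is injective, I take an arbitrary $f\in G^\C$ with $\rt f=0$ and show that $f$ vanishes identically. The geometric idea is that the geodesics of~$G$ obtained by translating the geodesics of~$H$ are exactly the data seen by the Radon transform on~$H$, so the vanishing of~$\rt f$ on all of~$G$ forces the vanishing of the Radon transform on~$H$ of a suitable restriction of~$f$. This rests on the total geodesy of subgroups noted in section~\ref{sec:comparison}: if $\gamma\in\Gamma_n$ has image contained in~$H$, then it is at once a nontrivial homomorphism $C_n\to H$ and a nontrivial homomorphism $C_n\to G$, since nontriviality only means the image is larger than $\{e\}$, and translating it produces a genuine geodesic of~$G$.

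Concretely, for each fixed $g\in G$ I would introduce the translated restriction $f_g\in H^\C$ defined by $f_g(h)=f(gh)$. For any $y\in H$ and any nontrivial homomorphism $\gamma\in\Gamma_n$ with image in~$H$, unwinding the definitions gives $\rt_n^H f_g(y,\gamma)=\sum_{t\in C_n}f_g(y\gamma(t))=\sum_{t\in C_n}f(gy\gamma(t))=\rt_n^G f(gy,\gamma)$, where $\rt^H$ and $\rt^G$ denote the Radon transforms on~$H$ and on~$G$. The point is that $gy$ is a legitimate shift element in~$G$ and $\gamma$ is a legitimate geodesic homomorphism into~$G$, so the right-hand side is one of the values of $\rt^G f$, which is assumed to be zero. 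Hence $\rt^H f_g=0$ for every~$g$.

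By the assumed injectivity of the Radon transform on~$H$ it follows that $f_g=0$, and in particular $f(g)=f_g(e)=0$. Since $g\in G$ was arbitrary, this yields $f=0$ and hence injectivity on~$G$. I expect no serious obstacle: the only things to verify are the harmless bookkeeping facts that a nontrivial homomorphism into~$H$ remains nontrivial into~$G$ and that the argument uses only those $G$-geodesics whose image lies in a single coset~$gH$. The essential content is simply that the Radon transform is compatible with the decomposition of~$G$ into cosets of~$H$, so that injectivity on a single block propagates to the whole group.
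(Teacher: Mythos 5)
Your proof is correct and follows essentially the same route as the paper's: both exploit that geodesics of~$H$ remain geodesics of~$G$ (total geodesy of subgroups) and use translation invariance to recover~$f$ at every point from the restricted data. Your version merely makes explicit, via the functions~$f_g$, the translation step that the paper compresses into ``whence~$f$ can be recovered everywhere in~$G$.''
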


\begin{proof}
Let $f:G\to\C$ be a function which we wish to recover from its Radon transform.
If we restrict our attention to geodesics that lie in~$H$, we obtain the Radon transform of~$f|_H$ on~$H$.
This is transform injective, so it determines~$f(e)$, whence~$f$ can be recovered everywhere in~$G$.
\end{proof}

We will repeatedly use the following special case of lemma~\ref{lma:subgroup}, so we state it separately.

\begin{lemma}
\label{lma:product}
Suppose $G=G_1\times G_2$, where each~$G_i$ is a finite abelian group.
If~$\rt$ is injective on~$G_1$, it is injective on~$G$.
\end{lemma}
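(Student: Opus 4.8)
The plan is to deduce this immediately from Lemma~\ref{lma:subgroup}, since $G_1$ embeds as a subgroup of $G$. Concretely, I would identify $G_1$ with the subgroup $G_1\times\{e\}$ of $G=G_1\times G_2$ through the canonical injective homomorphism $\iota\colon g\mapsto(g,e)$. Under this identification the two Radon transforms match up: a geodesic of $G_1$ is a coset of a cyclic subgroup $\langle g\rangle\le G_1$, and $\iota$ sends it to the coset of $\langle(g,e)\rangle\le G$, which is a geodesic of $G$ lying entirely in $G_1\times\{e\}$; conversely every geodesic of $G$ supported in $G_1\times\{e\}$ arises this way. Hence the Radon transform of $G_1$ coincides, up to the isomorphism $\iota$, with the restriction of~$\rt$ on~$G$ to those geodesics contained in $G_1\times\{e\}$.

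With this identification in place, the hypothesis that~$\rt$ is injective on~$G_1$ says exactly that~$\rt$ is injective on the subgroup $G_1\times\{e\}$ of~$G$. Lemma~\ref{lma:subgroup} then yields injectivity of~$\rt$ on all of~$G$, which is the claim.

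The only point requiring care---and it is where the main difficulty lies, though it is quite mild---is verifying that $\iota$ really does carry the collection of geodesics of~$G_1$ bijectively onto the collection of geodesics of~$G$ lying in $G_1\times\{e\}$. This reduces to the observation that $\iota$ is an isomorphism onto its image and therefore preserves cyclic subgroups and their orders (so that nontrivial homomorphisms correspond to nontrivial homomorphisms) as well as cosets. I note in passing that the abelian hypothesis on the~$G_i$ plays no role in the argument; it is stated only because the lemma will be invoked in the abelian setting. If $G_2$ is trivial there is nothing to prove, since then $G=G_1$.
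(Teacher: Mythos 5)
Your proof is correct and is exactly the paper's argument: the paper states this lemma explicitly as a special case of Lemma~\ref{lma:subgroup}, obtained by viewing $G_1$ as the subgroup $G_1\times\{e\}$ of $G$. Your remark that the abelian hypothesis is not actually needed is also accurate.
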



\section{Abelian groups}
\label{sec:abelian}

We consider first the case where the finite group~$G$ is abelian.
For the sake of a simple presentation, we will only present our lemmas in the generality we need here.
By Kronecker's decomposition theorem any finite abelian group~$G$ can be written as a product $G=C_{p_1^{k_1}}\times\cdots\times C_{p_N^{k_N}}$ for some~$N$, where each~$p_i$ is a prime and each~$k_i$ a natural number.

\begin{lemma}
\label{lma:product-geodesic}
Suppose $G=C_{p_1^{k_1}}\times\cdots\times C_{p_N^{k_N}}$ and $\gamma:C_p\to G$ is a homomorphism.
If $\pi_i:G\to C_{p_i^{k_i}}$ is the projection, the homomorphism~$\pi_i\circ\gamma$ is trivial unless $p=p_i$.
\end{lemma}

\begin{proof}
This proof follows from the observation that there are no nontrivial homomorphisms $C_p\to C_{q^k}$ for distinct primes~$p$ and~$q$.
\end{proof}

\begin{lemma}
\label{lma:prime-power}
If $G=C_{p^k}$ for~$p$ prime, then~$\rt$ is not injective.
\end{lemma}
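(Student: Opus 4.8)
The plan is to exhibit an explicit nonzero function in the kernel of~$\rt$. Since $\abs{G}=p^k$ has~$p$ as its only prime factor, lemma~\ref{lma:prime} reduces the whole problem to geodesics of length~$p$: for any prime $q\neq p$ there are no nontrivial homomorphisms $C_q\to C_{p^k}$ (an element of order~$q$ would have order dividing~$p^k$, forcing it to be trivial), so $\Gamma_q=\emptyset$ and $\rt_q$ imposes no conditions. Consequently $\ker\rt=\ker\rt_p$, and it suffices to find a nonzero $f:G\to\C$ whose sums over all length-$p$ geodesics vanish.

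Next I would describe these geodesics concretely. A nontrivial homomorphism $\gamma:C_p\to G$ is injective, so its image is a subgroup of order~$p$; because~$G$ is cyclic it has a \emph{unique} subgroup~$H$ of order~$p$. Hence every length-$p$ geodesic is a coset~$xH$, and for any such~$\gamma$ we have $\rt_pf(x,\gamma)=\sum_{h\in H}f(xh)$, since as~$t$ ranges over~$C_p$ the element $\gamma(t)$ runs bijectively through~$H$. This value depends only on the coset~$xH$ (a reparametrisation of~$\gamma$ merely permutes the summands). Therefore $f\in\ker\rt$ is equivalent to the single family of conditions $\sum_{h\in H}f(xh)=0$ for all $x\in G$.

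Finally I would build the kernel element. As $p\geq2$, the subgroup~$H$ contains an element $h_0\neq e$. Define $f=\mathbf 1_{\{e\}}-\mathbf 1_{\{h_0\}}$, that is, $f(e)=1$, $f(h_0)=-1$, and $f=0$ elsewhere. The points~$e$ and~$h_0$ lie in the common coset~$H$, so the sum of~$f$ over~$H$ is~$0$, while the sum over every other coset vanishes because~$f$ is supported on~$H$. Thus $\rt_pf=0$, hence $f\in\ker\rt$, even though $f\neq0$, which proves noninjectivity.

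Because the mechanism is so simple, I do not anticipate any genuine obstacle; the only point needing care is the justification that geodesics of lengths other than~$p$ contribute nothing, which is exactly what lemma~\ref{lma:prime} together with the absence of nontrivial homomorphisms $C_q\to C_{p^k}$ supplies. One could instead argue by dimension counting, observing that $\rt_p$ factors through the $p^{k-1}$-dimensional space of coset sums and so has kernel of dimension at least $p^k-p^{k-1}>0$, but the explicit function above is cleaner and produces a concrete kernel element.
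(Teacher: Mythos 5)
Your proof is correct. It follows the paper's first step exactly (lemma~\ref{lma:prime} plus the nonexistence of nontrivial homomorphisms $C_q\to C_{p^k}$ for $q\neq p$ reduce everything to length-$p$ geodesics, which are the cosets of the unique order-$p$ subgroup~$H$), but it then diverges: the paper simply counts dimensions --- there are only $p^{k-1}$ cosets of~$H$, so $\rt_pf$ carries at most $p^{k-1}<p^k$ numbers and the kernel is nontrivial --- whereas you exhibit the explicit kernel element $f=\mathbf 1_{\{e\}}-\mathbf 1_{\{h_0\}}$ with $h_0\in H\setminus\{e\}$. The dimension count is shorter; your construction is constructive and identifies the kernel as exactly the functions whose sums over the cosets of~$H$ vanish, which is slightly more information than the lemma demands (and the concrete element would feed directly into the product construction of lemma~\ref{lma:cyclic}, which only needs existence). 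You also note the dimension-counting alternative yourself, so you have in effect both the paper's proof and a sharper one.
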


\begin{proof}
By lemma~\ref{lma:prime} it suffices to consider geodesics of prime length, and thus only geodesics of length~$p$.
There are~$p^{k-1}$ such geodesics (identifying geodesics that differ by an automorphism of~$C_{p^i}$), so~$\rt_pf$ only gives~$p^{k-1}$ numbers.
This is not enough to determine a function on the group~$G$ for dimensional reasons.
\end{proof}

\begin{lemma}
\label{lma:cyclic}
If~$G$ is cyclic, then~$\rt$ is not injective.
\end{lemma}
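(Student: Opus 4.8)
The plan is to realize the cyclic group $G$ as a product of its prime-power cyclic factors and to exhibit an explicit nonzero element of $\ker\rt$ built as a product of kernel elements on the factors. By the Kronecker decomposition (equivalently the Chinese remainder theorem), write $G=C_{p_1^{k_1}}\times\cdots\times C_{p_N^{k_N}}$, where the primes $p_1,\dots,p_N$ are pairwise distinct. For each~$i$, lemma~\ref{lma:prime-power} provides a nonzero function $f_i\colon C_{p_i^{k_i}}\to\C$ lying in the kernel of the Radon transform on the factor $C_{p_i^{k_i}}$; note that on this factor the only nontrivial geodesics have length~$p_i$, so membership $f_i\in\ker\rt$ amounts simply to $\rt_{p_i}f_i=0$.

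Next I would form the product function $f(x_1,\dots,x_N)=f_1(x_1)\cdots f_N(x_N)$ on~$G$, which is nonzero because each~$f_i$ is, and claim that $f\in\ker\rt$. By lemma~\ref{lma:prime} it suffices to check $\rt_pf=0$ for every prime~$p$. Fix such a~$p$ together with a nontrivial homomorphism $\gamma\colon C_p\to G$. If $p\notin\{p_1,\dots,p_N\}$ there are no nontrivial homomorphisms $C_p\to G$ and there is nothing to check; otherwise $p=p_j$ for a \emph{unique} index~$j$ (here distinctness of the primes is essential), and lemma~\ref{lma:product-geodesic} shows that $\pi_i\circ\gamma$ is trivial for every $i\neq j$. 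Hence $\gamma(t)$ has the form $(e,\dots,e,\gamma_j(t),e,\dots,e)$ with $\gamma_j\colon C_p\to C_{p_j^{k_j}}$ nontrivial.

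With this structure the sum over the geodesic separates variables. For $x=(x_1,\dots,x_N)$ I expect to compute
\begin{equation}
\rt_pf(x,\gamma)
=\sum_{t\in C_p}\prod_{i=1}^N f_i\bigl((x\gamma(t))_i\bigr)
=\Bigl(\prod_{i\neq j}f_i(x_i)\Bigr)\sum_{t\in C_p}f_j(x_j\gamma_j(t))
=\Bigl(\prod_{i\neq j}f_i(x_i)\Bigr)\rt_pf_j(x_j,\gamma_j),
\end{equation}
since $\gamma(t)$ moves only the $j$-th coordinate. The last factor vanishes because $f_j$ lies in the kernel of the Radon transform on $C_{p_j^{k_j}}$. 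Therefore $\rt_pf=0$ for every prime~$p$, so $f\in\ker\rt$ while $f\neq0$, and $\rt$ fails to be injective on~$G$.

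As for where the work concentrates: the only real content beyond bookkeeping is the separation-of-variables step, which rests entirely on lemma~\ref{lma:product-geodesic}, namely the fact that a prime-length geodesic on the product is nontrivial in at most one coordinate. Once that is in hand, the product function automatically inherits the kernel property from its factors, and the dimension count already carried out in lemma~\ref{lma:prime-power} guarantees that each factor kernel is nonzero. I do not anticipate a serious obstacle here; the main point to state carefully is that distinctness of the primes forces the single nontrivial coordinate, so that each prime is handled by exactly one factor.
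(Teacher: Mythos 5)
Your proposal is correct and follows essentially the same route as the paper's own proof: decompose the cyclic group into prime-power factors, take kernel elements on each factor from lemma~\ref{lma:prime-power}, form the product function, and use lemma~\ref{lma:product-geodesic} to separate variables along any prime-length geodesic. The only cosmetic difference is that you treat the case $N=1$ uniformly rather than splitting it off.
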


\begin{proof}
We use again the decomposition $G=C_{p_1^{k_1}}\times\cdots\times C_{p_N^{k_N}}$.
If $N=1$, the claim is given by lemma~\ref{lma:prime-power}.
We assume thus that $N\geq2$.

The primes $p_1,\dots,p_N$ are distinct, since otherwise~$G$ would not be cyclic.
We only need to consider geodesics of these prime lengths.

For any~$i$ let $f_i:C_{p_i^{k_i}}\to\C$ be a nontrivial function in the kernel of~$\rt_{p_i}$, whose existence is guaranteed by lemma~\ref{lma:prime-power}.
Define $f:G\to\C$ by letting $f(x_1,\dots,x_N)=f_1(x_1)\cdots f_N(x_N)$.
It remains to prove that $f\in\ker\rt$.

Indeed, take any index~$i$ and consider a homomorphism $\gamma\in\Gamma_{p_i}$ and $x\in G$.
By lemma~\ref{lma:product-geodesic} there is a homomorphism $\varphi:C_{p_i}\to C_{p_i^{k_i}}$ such that $\gamma(t)=(e,\dots,e,\varphi(t),e,\dots,e)$ where~$\varphi$ occurs in the~$i$th position.
Now
\begin{equation}
\rt_{p_i}f(x,\gamma)
=
\rt_{p_i}f_i(x_i,\varphi)
\prod_{\mathclap{n\in\{1,\dots,N\}\setminus\{i\}}}f_n(x_n)
=
0.
\qedhere
\end{equation}
\end{proof}

\begin{lemma}
\label{lma:square}
If $G=C_p\times C_p$ for a prime~$p$, then~$\rt_p$ is injective.
\end{lemma}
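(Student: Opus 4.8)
The plan is to view $G=C_p\times C_p$ as the affine plane $\F_p^2$ and to identify the relevant geodesics with its affine lines. By lemma~\ref{lma:prime} only geodesics of prime length matter, and since $\abs{G}=p^2$ the only prime that occurs is $p$ itself. A homomorphism in $\Gamma_p$ is determined by the image of a generator, which may be any nonzero element; after identifying homomorphisms that differ by an automorphism of $C_p$ (which merely permutes the summands and so leaves every coset sum unchanged), the subgroups of order $p$ correspond exactly to the one-dimensional subspaces of $\F_p^2$, of which there are $(p^2-1)/(p-1)=p+1$. I would fix representatives: let $H_1,\dots,H_{p+1}$ be these lines through the origin and choose for each a homomorphism $\gamma_i$ with image $H_i$. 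Then the $p+1$ geodesics through a point $x$ are the cosets $x+H_i$, and $\rt_p f(x,\gamma_i)=\sum_{h\in H_i}f(x+h)$ is the sum of $f$ over $x+H_i$.

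First I would record the total mass $S=\sum_{x\in G}f(x)$, which is already determined by $\rt_p f$ through~\eqref{eq:group-sum}. The heart of the argument is then an elementary incidence count. Fix $x\in G$. Through $x$ there passes exactly one line in each of the $p+1$ directions, and two distinct lines through $x$ meet only at $x$; hence these $p+1$ lines cover $x$ with multiplicity $p+1$ and every other point of $G$ exactly once, a tally consistent with $(p+1)(p-1)+1=p^2$. Summing the corresponding values of $\rt_p f$ therefore gives
\begin{equation}
\sum_{i=1}^{p+1}\rt_p f(x,\gamma_i)
=(p+1)f(x)+\sum_{y\neq x}f(y)
=p\,f(x)+S .
\end{equation}

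Since the left-hand side and $S$ are both known, and since $p$ is invertible in a field of characteristic zero, this yields the explicit reconstruction $f(x)=\tfrac1p\bigl(\sum_{i=1}^{p+1}\rt_p f(x,\gamma_i)-S\bigr)$ for every $x\in G$, which proves injectivity of $\rt_p$. (Alternatively one could invoke translation invariance and recover only $f(e)$, as in section~\ref{sec:observations}, but the pointwise formula is equally direct.) The only step that genuinely uses the hypothesis $G=C_p\times C_p$ is the incidence count: it is precisely the affine-plane structure of $\F_p^2$—every pair of points lies on a unique line and each direction partitions the plane into $p$ parallel cosets—that makes the $p+1$ line sums through $x$ overdetermine $f(x)$. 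I expect this combinatorial identity to be the sole real content, with the remainder being bookkeeping about the correspondence between nontrivial homomorphisms and lines.
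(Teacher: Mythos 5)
Your proof is correct and is essentially the paper's argument: both rest on the same incidence count (the $p+1$ lines through $x$ cover $x$ with multiplicity $p+1$ and every other point once) combined with the total sum recovered via~\eqref{eq:group-sum}. The only cosmetic difference is that the paper sums over all $p^2-1$ homomorphisms in $\Gamma_p$, picking up an extra factor of $p-1$, whereas you sum over one representative per line.
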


\begin{proof}
Each point in~$G$ corresponds to a unique homomorphism $C_p\to G$, so $\#\Gamma_p=p^2-1$.
Each $x\in G\setminus\{e\}$ belongs to the image of $p-1$ homomorphisms $C_p\to G$, since the automorphism group of~$C_p$ has order $p-1$.
Thus
\begin{equation}
\label{eq:vv2}
\sum_{\gamma\in\Gamma_p}\rt_pf(x,\gamma)
=
(p^2-p)f(x)+(p-1)\sum_{y\in G}f(y)
\end{equation}
for every $x\in G$.

If $\rt_pf=0$, the identities~\eqref{eq:group-sum} and~\eqref{eq:vv2} give $f(x)=0$ for every $x\in G$.
Thus~$\ker\rt_p$ is trivial.
\end{proof}

In fact, the proof gives a reconstruction formula in the case $G=C_p\times C_p$, valid for any $x\in G$ and $\varphi\in\Gamma_p$:
\begin{equation}
f(x)
=
\frac{1}{p^2-p}\sum_{\gamma\in\Gamma_p}\rt_pf(x,\gamma)
-
\frac{1}{p^2}\sum_{y\in G}\rt_pf(y,\varphi).
\end{equation}
Compare this with the normal operator of the X-ray or Radon transform in the Euclidean case\footnote{If~$\rt^*$ is the adjoint of~$\rt$, the normal operator~$\rt^*\rt$ of~$\rt$ is such that~$\rt^*\rt f(x)$ is the integral over the integrals over all lines passing through~$x$. The normal operator can be used to invert the Radon transform~\cite{book-helgason}.}.

\begin{theorem}
\label{thm:abelian}
Let~$G$ be a nontrivial finite abelian group.
Then the following are equivalent:
\begin{enumerate}
\item $G$ is not cyclic.
\item $G$ contains a square of a cyclic group.
\item $\rt$ is injective on~$G$.
\end{enumerate}
\end{theorem}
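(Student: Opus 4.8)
The plan is to prove the three-way equivalence as a cycle $(3)\Rightarrow(1)\Rightarrow(2)\Rightarrow(3)$, using the lemmas already established so that essentially no new computation is required. All of the analytic content — injectivity of $\rt_p$ on $C_p\times C_p$ and noninjectivity on cyclic groups — is already contained in lemmas~\ref{lma:square} and~\ref{lma:cyclic}, so the theorem is mostly a matter of organizing these facts around the notion of containing a square. The implication $(3)\Rightarrow(1)$ is simply the contrapositive of lemma~\ref{lma:cyclic}: if~$G$ is cyclic then $\rt$ fails to be injective, so injectivity forces~$G$ to be non-cyclic.

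For $(1)\Rightarrow(2)$ I would invoke Kronecker's decomposition $G=C_{p_1^{k_1}}\times\cdots\times C_{p_N^{k_N}}$. Such a product is cyclic exactly when the primes $p_1,\dots,p_N$ are pairwise distinct, since this is precisely the condition under which the factor orders are pairwise coprime. Hence if~$G$ is not cyclic, two factors must share a prime, say $p_i=p_j=p$ with $i\neq j$. Each of $C_{p^{k_i}}$ and $C_{p^{k_j}}$ contains a subgroup of order~$p$, and since these lie in distinct coordinates their internal direct product is a copy of $C_p\times C_p$ sitting inside~$G$. This is the square of the cyclic group~$C_p$, which is~(2).

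Finally, for $(2)\Rightarrow(3)$ I would first reduce a general square to a prime square: if $G$ contains $C_n\times C_n$ with $n\geq2$, choose any prime $p\mid n$; then $C_n$ has an element of order~$p$, so $C_n\times C_n$, and therefore~$G$, contains $C_p\times C_p$. Lemma~\ref{lma:square} gives injectivity of $\rt_p$ on this subgroup, so that $\ker\rt\subseteq\ker\rt_p=0$ there, and lemma~\ref{lma:subgroup} then propagates injectivity up to~$G$. I do not expect a real obstacle here; the one step meriting care is the observation that ``$G$ contains a square of a cyclic group'' is equivalent to ``$G$ contains $C_p\times C_p$ for some prime~$p$,'' which is exactly what allows statement~(2) to be fed into the prime-length lemma~\ref{lma:square}.
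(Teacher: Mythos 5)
Your proposal is correct and follows essentially the same route as the paper: lemma~\ref{lma:cyclic} for the implication from injectivity to non-cyclicity, the Kronecker decomposition (a repeated prime) for the equivalence of the first two conditions, and lemma~\ref{lma:square} combined with the subgroup lemma to pass from a contained $C_p\times C_p$ to injectivity on~$G$. The only cosmetic differences are that you argue in a cycle rather than via a separate equivalence of (1) and (2), and that you cite lemma~\ref{lma:subgroup} directly where the paper cites its special case, lemma~\ref{lma:product}.
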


\begin{proof}
Equivalence of the first two conditions follows from the fact that~$G$ is a product of cyclic groups.
In fact, both of these conditions are equivalent with the statement that at least one prime occurs at least twice in the decomposition $G=C_{p_1^{k_1}}\times\cdots\times C_{p_N^{k_N}}$.
Lemma~\ref{lma:cyclic} shows that the third condition implies the first one.
Lemmas~\ref{lma:square} and~\ref{lma:product} show that the second condition implies the third one.
\end{proof}

\section{Product groups}
\label{sec:prod}

\subsection{Direct products}

We now turn to the question of determining injectivity of the Radon transform on a product group if the corresponding results are known on the constituent groups.
We are able to give a rather complete answer.

\begin{theorem}
\label{thm:prod}
Let~$G_1$ and~$G_2$ be two finite groups.
Then the Radon transform is not injective on $G_1\times G_2$ if and only if it is not injective on either of the constituent groups and the orders of the constituent groups are coprime.
\end{theorem}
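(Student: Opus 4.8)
The statement splits into a sufficiency direction (the three listed conditions force noninjectivity) and a necessity direction (noninjectivity forces all three conditions). I would treat them separately, and the necessity direction is where the real content lies.

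For sufficiency, suppose $\rt$ fails to be injective on each~$G_i$ and that $\abs{G_1}$ and $\abs{G_2}$ are coprime. The plan is to mimic the tensor-product construction used in lemma~\ref{lma:cyclic}: pick nonzero functions $f_i\in\ker\rt$ on~$G_i$ for $i=1,2$ and set $f(x_1,x_2)=f_1(x_1)f_2(x_2)$, which is nonzero. By lemma~\ref{lma:prime} it suffices to check that $\rt_pf=0$ for every prime~$p$. Because $\gcd(\abs{G_1},\abs{G_2})=1$, the prime~$p$ divides at most one of the two orders, so a nontrivial homomorphism $\gamma=(\gamma_1,\gamma_2):C_p\to G_1\times G_2$ has a nontrivial component in exactly one factor, say $\gamma_1$, while $\gamma_2$ is trivial. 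The Radon sum then factors as $\rt_pf(x,\gamma)=f_2(x_2)\rt_pf_1(x_1,\gamma_1)$, which vanishes because $\gamma_1$ is nontrivial and $f_1\in\ker\rt$. Hence $f\in\ker\rt$ and $\rt$ is not injective on the product.

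For necessity I would argue by contraposition, showing that if any one of the three conditions fails then $\rt$ is injective on $G_1\times G_2$. Two of the cases are immediate from lemma~\ref{lma:subgroup}: if $\rt$ is injective on~$G_1$ (respectively~$G_2$), then applying that lemma to the subgroup $G_1\times\{e\}$ (respectively $\{e\}\times G_2$), on which the Radon transform is exactly the one on the corresponding factor, yields injectivity on the product.

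The remaining case, where $\gcd(\abs{G_1},\abs{G_2})>1$, is the crux of the argument. Here I pick a prime~$p$ dividing both orders and invoke Cauchy's theorem to produce an element of order~$p$ in each factor, generating cyclic subgroups $C_p<G_1$ and $C_p<G_2$. Their product is a subgroup of $G_1\times G_2$ isomorphic to $C_p\times C_p$, on which lemma~\ref{lma:square} guarantees that $\rt_p$ -- and hence~$\rt$ -- is injective; a final appeal to lemma~\ref{lma:subgroup} upgrades this to injectivity on all of $G_1\times G_2$. The main obstacle is recognizing that the coprimality condition is precisely what decides the outcome: it is exactly the feature that lets the tensor construction destroy injectivity, and its failure is exactly what produces the $C_p\times C_p$ subgroup that preserves injectivity through lemma~\ref{lma:square}. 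Once this is seen, all the structural work is already packaged in the earlier lemmas, and only the elementary factoring of the Radon sum and a single application of Cauchy's theorem remain.
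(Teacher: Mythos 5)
Your proof is correct and follows essentially the same route as the paper: the same tensor-product kernel element in the coprime case, the same $C_p\times C_p$ subgroup argument (the paper cites Sylow where you cite Cauchy, an immaterial difference) combined with lemmas~\ref{lma:square} and~\ref{lma:subgroup}, and the same appeal to lemma~\ref{lma:subgroup} when one factor already has an injective transform.
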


\begin{proof}
We need to show the following:
\begin{enumerate}
\item If the Radon transform is injective on either constituent group, then it is injective on the product.
\item If the orders of the constituent groups have a common prime factor, then the Radon transform is injective on the product.
\item If the orders of the constituent groups are coprime and the Radon transform is noninjective on both, then it is noninjective on the product.
\end{enumerate}

1. This is the statement of lemma~\ref{lma:product}.

2.
Let~$p$ be a prime that divides~$\abs{G_i}$ for both $i=1,2$.
By the existence of Sylow subgroups each~$G_i$ has a subgroup isomorphic to the cyclic group~$C_p$.
Thus the product $G_1\times G_2$ has a subgroup isomorphic to $C_p\times C_p$.
By lemma~\ref{lma:subgroup} and theorem~\ref{thm:abelian} the Radon transform is injective on $G_1\times G_2$.

3.
It suffices to consider geodesics of prime length.
Let~$p$ be a prime and~$\gamma:C_p\to G_1\times G_2$ a nontrivial homomorphism.
By assumption $p$ can only divide the order of one of the two groups~$G_1$ and~$G_2$, so~$\gamma$ composed with one of the projections must be trivial.
Thus if~$f_i$ is in the kernel of the Radon transform on~$G_i$, the function $f(x_1,x_2)=f_1(x_1)f_2(x_2)$ is in the kernel of the Radon transform on~$G$.
\end{proof}

\subsection{Semidirect products}

The above result about direct products does not generalize easily to semidirect products.
Let us attempt to describe this issue.

Let~$N$ and~$H$ be finite groups and $\varphi:H\to\Aut(N)$ a homomorphism, and consider the semidirect product $G=N\rtimes_\varphi H$.
We consider~$N$ and~$H$ as subgroups of~$G$.
We want to study cyclic subgroups of~$G$ if~$\abs{N}$ are~$\abs{H}$ coprime; such study was at the heart of the proof of theorem~\ref{thm:prod}.

Let~$p$ be a prime that divides~$\abs{N}$ but not~$\abs{H}$ and consider an element $nh\in G$ of order~$p$, where $n\in N$ and $h\in H$.
Now $(nh)^p=\tilde n\tilde h$ with $\tilde n=n\varphi_{h}(n)\varphi_{h^2}(n)\cdots\varphi_{h^{p-1}}(n)\in N$ and $\tilde h=h^p\in H$.
The condition $(nh)^p=e_G$ implies that $\tilde n=e_N$ and $\tilde h=e_H$, the latter of which implies that $h=e_H$ since~$p$ does not divide~$\abs{H}$.
Thus $\langle nh\rangle\leq N<G$.

If, instead,~$p$ divides~$\abs{H}$ but not~$\abs{N}$, we \emph{cannot} draw the conclusion that $\langle nh\rangle\leq H<G$ without assuming anything much short of~$\varphi$ being trivial and hence the product being direct.
The problem is that~$\tilde n$ and~$\tilde h$ can both be nontrivial even though $(nh)^p=e_G$.
For example, let~$q$ be a prime distinct from~$q$ and let $H=C_p$ and let~$N$ be the product of~$p$ copies of~$C_q$ indexed by~$C_p$.
Let~$H$ act on~$N$ by permuting the indices.
For any $h\in H\setminus\{e_H\}$ and $n=(a_1,\dots,a_p)\in C_q^p=N$ such that $a_1a_2\cdots a_p=e_{C_q}$ a simple calculation shows that $\tilde n=e_N$ and $\tilde h=e_H$.

To further demonstrate problems with semidirect products, let us mention two examples.
All dihedral are semidirect products of cyclic groups.
The Radon transform fails to be injective in all cyclic groups, but is is injective on all dihedral groups.
On the other hand, it is never injective on a dicyclic group, and~$Dic_6$ is a semidirect product of~$Dic_2$ and~$C_3$.
It is not clear whether there is a simple version of theorem~\ref{thm:prod} for semidirect products.
These and other examples are studied in the next section.

\section{Nonabelian groups}
\label{sec:nonabelian}

Combining theorem~\ref{thm:abelian} with lemma~\ref{lma:subgroup} we see that the Radon transform is injective on~$G$ if~$G$ contains an abelian subgroup that is not cyclic.
This condition is simple, but it does not cover all nonabelian groups.

A natural construction whose relation to injectivity of the Radon transform one may consider is the quotient.
It does not hold that if the Radon transform is injective on~$G$, it would also be injective on any quotient~$G/H$.
The argument generalizing the one for Lie groups (see~\cite[Proposition~2.4]{I:lie-radon}) fails at a critical point.

Let us present it here:
Let~$G$ be a finite group,~$H$ its normal subgroup, $\pi:G\to G/H$ the projection, $\gamma:C_p\to G$ a geodesic and $f:G/H\to\C$ a function.
Indicating the underlying group by a superscript in the transform, we get $\rt_p^G(\pi^*f)(x,\gamma)=\rt^{G/H}_pf(\pi(x),\pi\circ\gamma)$.
It seems that if~$\rt_p^G$ is injective, then we could recover~$\pi^*f$ and thus~$f$ from the knowledge of~$\rt^{G/H}_pf$.
The reason why this fails is that $\pi\circ\gamma$ may be trivial and hence may not correspond to a geodesic.
If we allowed trivial homomorphisms in our definition of geodesics, every Radon transform on every (finite or infinite) group would be trivially injective.

Let us give a more specific example.
Let~$H$ be a finite group with an injective Radon transform and let $n\geq2$ be an integer.
The Radon transform is injective on $G=H\times C_n$ by lemma~\ref{lma:product}.
But $H\times\{e\}$ is a normal subgroup of~$G$ and the quotient equals~$C_n$, but the Radon transform is not injective on~$C_n$ because of lemma~\ref{lma:cyclic}.

Also, there is an example where the Radon transform is not injective on~$G$, but it is injective on the quotient~$G/H$.
Indeed, if $G=Q_8$ is the quaternion group and~$H$ its only two element subgroup, then $G/H=C_2\times C_2$.
The Radon transform is not injective on~$G$ ($Q_8=Dic_2$; see proposition~\ref{prop:Dic_n}) but is injective on $C_2\times C_2$.

Proposition~\ref{prop:quotient} does give a result involving quotient groups.
It involves a variant of the Radon transform which we call the maximal Radon transform, but we refrain from discussing it before we introduce it in section~\ref{sec:var} in order to avoid confusion.

\subsection{Groups with an injective Radon transform}

Let us give explicit examples of families of finite groups where the Radon transform is injective.

\begin{proposition}
\label{prop:D_n}
The Radon transform is injective on the dihedral group~$D_n$ if and only if $n\geq3$.
\end{proposition}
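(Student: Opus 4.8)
The plan is to reduce to two cases according to the parity of the order of the cyclic subgroup of rotations $C\le D_n$, which has index two. I will write $D_n=C\sqcup\sigma C$, where $\sigma$ is any reflection; every element of $\sigma C$ is then a reflection of order two, while $C$ is cyclic. By lemma~\ref{lma:prime} it suffices to consider geodesics of prime length throughout.

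First I would dispose of the case where $\abs{C}$ is even. Then $C$ contains a unique element $\rho$ of order two, and since $\sigma\rho\sigma^{-1}=\rho^{-1}=\rho$ this element is central in $D_n$. Consequently $\{e,\rho,\sigma,\rho\sigma\}$ is a subgroup isomorphic to $C_2\times C_2$. By theorem~\ref{thm:abelian} the Radon transform is injective on this Klein four subgroup, so by lemma~\ref{lma:subgroup} it is injective on $D_n$.

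The main work, and the main obstacle, is the case where $\abs{C}$ is odd and larger than one. Here lemma~\ref{lma:subgroup} is of no help: one checks that when $\abs{C}$ is odd every abelian subgroup of $D_n$ is cyclic, since an abelian subgroup can contain at most one reflection and any reflection inverts every rotation. Instead I would exploit the interaction between two families of geodesics. The length-two geodesics are exactly the cosets of the order-two subgroups generated by reflections, and a short computation shows that the coset $\{r^i,\,r^{i+k}\sigma\}$ joins an arbitrary rotation to an arbitrary reflection; thus these geodesics realise the complete bipartite graph between $C$ and $\sigma C$. Writing $a_i=f(r^i)$ and $b_l=f(r^l\sigma)$, the vanishing of all length-two sums means $a_i+b_l=0$ for all $i$ and $l$, which forces $a_i\equiv-c$ and $b_l\equiv c$ for a single constant $c$; in other words $f=c(\mathbf{1}_{\sigma C}-\mathbf{1}_{C})$.

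It then remains to kill this one remaining degree of freedom using a geodesic of odd prime length. Since $\abs{C}$ is odd and exceeds one, it has an odd prime factor $p$, and the unique subgroup $P\le C$ of order $p$ consists of rotations. Summing $f=c(\mathbf{1}_{\sigma C}-\mathbf{1}_{C})$ over $P$ gives $-cp$, and since $f$ lies in the kernel of $\rt_p$ this sum vanishes, whence $c=0$ and $f=0$. Together with the even case this proves injectivity for all nondegenerate $D_n$. For the converse one checks the excluded small value directly: the only dihedral group on which the Radon transform fails to be injective is the degenerate case $D_n\cong C_2$, which is cyclic and hence noninjective by lemma~\ref{lma:cyclic}. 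The hard part is the odd case, precisely because no noncyclic abelian subgroup is available and one is forced to combine the complete bipartite family of reflection geodesics with a single rotation geodesic of odd prime length.
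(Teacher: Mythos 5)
Your proof is correct, but it takes a genuinely different route from the paper's. The paper gives a single uniform reconstruction formula: writing $\gamma_1,\dots,\gamma_n$ for the order-two geodesics through the $n$ reflections and $\varphi$ for the geodesic running through the rotation subgroup, it observes that $\sum_{i=1}^n\rt_2f(e,\gamma_i)-\rt_nf(r,\varphi)=nf(e)$ for any reflection $r$, which recovers $f(e)$ (hence $f$, by translation invariance) in one stroke, with no case split and no appeal to lemma~\ref{lma:prime}. You instead split on the parity of $\abs{C}$: for even $n$ you locate a central involution, produce a Klein four subgroup, and quote theorem~\ref{thm:abelian} with lemma~\ref{lma:subgroup}; for odd $n\geq3$ you note that the length-two geodesics realise the complete bipartite graph between rotations and reflections, which pins $f$ down to the one-dimensional space spanned by $\mathbf{1}_{\sigma C}-\mathbf{1}_{C}$, and you then kill the remaining constant with one rotation geodesic of odd prime length. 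Both arguments are sound; yours correctly diagnoses why the odd case is the genuinely hard one (no noncyclic abelian subgroup is available), while the paper's is shorter and yields an explicit inversion formula valid for all $n\geq2$ at once.

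One discrepancy is with the statement rather than with your argument. Under the convention $\abs{D_n}=2n$ --- the one both you and the paper's own proof implicitly use, since there are $n$ reflections and a rotation subgroup $C_n$ --- the group $D_2$ is the Klein four group, not cyclic, and your even case (equally, the paper's formula specialised to $n=2$) shows that the Radon transform \emph{is} injective on it. The paper's claim that $D_2$ is cyclic, and hence that injectivity fails at $n=2$, is only true if $D_2$ is read as the dihedral group of order two. Your conclusion, that injectivity fails exactly for the degenerate cyclic case $C_2$, is the correct one for the standard convention, but be aware that it does not literally match the ``if and only if $n\geq3$'' in the statement; as written, your proof establishes injectivity for all $n\geq2$ and so does not prove the stated ``only if'' direction at $n=2$.
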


\begin{proof}
The groups~$D_1$ and~$D_2$ are cyclic, so noninjectivity follows from theorem~\ref{thm:abelian}.
Suppose thus that $n\geq3$.

Let $f:D_n\to\C$ be a function that we wish to reconstruct from its Radon transforms.
By translation invariance of the problem it suffices to reconstruct~$f(e)$.

We may classify elements of~$D_n$ as reflections and rotations.
Let $\gamma_1,\dots,\gamma_n$ be the homomorphisms $C_2\to D_n$ going through the reflections.
There is a single homomorphism $\varphi:C_n\to D_n$ passing through all rotations.
Let $r\in D_n$ be any reflection, so that the geodesic $C_n\ni t\mapsto r\varphi(t)\in D_n$ passes through all reflections.
Now it is easy to observe that
\begin{equation}
\sum_{i=1}^n\rt_2f(e,\gamma_i)
-
\rt_nf(r,\varphi)
=
nf(e).
\qedhere
\end{equation}
\end{proof}

\begin{proposition}
\label{prop:S_n}
The Radon transform is injective on the symmetric group~$S_n$ if and only if $n\geq3$.
\end{proposition}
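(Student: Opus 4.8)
The plan is to separate the claim into the easy noninjectivity statement for small $n$ and the injectivity statement for $n\geq3$, reducing the latter to the abelian theory already developed. For $n\leq2$ the group $S_n$ is either trivial or cyclic (indeed $S_2\cong C_2$), so I would invoke theorem~\ref{thm:abelian} directly to get noninjectivity; this settles the ``only if'' direction with no extra work. The substance of the proposition is therefore the ``if'' direction, and my overall strategy there will be to locate a noncyclic abelian subgroup and push injectivity up to $S_n$ via lemma~\ref{lma:subgroup}.

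For $n\geq4$ this strategy works cleanly. The two disjoint transpositions $(1\,2)$ and $(3\,4)$ commute, each has order two, and together with their product they generate the subgroup $\{e,(1\,2),(3\,4),(1\,2)(3\,4)\}$, which is isomorphic to the Klein four group $C_2\times C_2$. By theorem~\ref{thm:abelian} the Radon transform is injective on a noncyclic finite abelian group, so it is injective on this copy of $C_2\times C_2$; lemma~\ref{lma:subgroup} then transfers injectivity to all of $S_n$. The only thing to check carefully here is that forming two disjoint transpositions requires four distinct symbols, which is exactly why this argument needs $n\geq4$.

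The main obstacle is the borderline case $n=3$, where the subgroup argument above fails: $S_3$ has order six, and every proper subgroup of $S_3$ is cyclic (the three order-two subgroups generated by transpositions and the order-three subgroup $A_3\cong C_3$), while $S_3$ itself is nonabelian. Hence $S_3$ contains no noncyclic abelian subgroup, and there is no Klein four group to exploit. To handle this case I would instead use the isomorphism $S_3\cong D_3$ together with proposition~\ref{prop:D_n}, which already establishes injectivity on the dihedral group $D_n$ for all $n\geq3$ and in particular on $D_3$. Combining the three cases completes the proof.
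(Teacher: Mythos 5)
Your proof is correct and follows essentially the same route as the paper: both dispose of $n\le 2$ via cyclicity, handle $S_3$ through the isomorphism with $D_3$ and proposition~\ref{prop:D_n}, and transfer injectivity to $S_n$ with lemma~\ref{lma:subgroup}. The only difference is that for $n\ge 4$ the paper embeds $D_3$ while you embed the Klein four group $\{e,(1\,2),(3\,4),(1\,2)(3\,4)\}\cong C_2\times C_2$ and invoke theorem~\ref{thm:abelian}; both work, and your choice mirrors the paper's own argument for $A_n$ in proposition~\ref{prop:A_n}.
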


\begin{proof}
This follows from the previous theorem and lemma~\ref{lma:subgroup}, since $S_n=D_n$ for $n\leq3$ and~$S_n$ contains~$D_3$ as a subgroup if $n\geq4$.
\end{proof}

\begin{proposition}
\label{prop:A_n}
The Radon transform is injective on the alternating group~$A_n$ if and only if $n\geq4$.
\end{proposition}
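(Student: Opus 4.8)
The plan is to follow the same strategy as the proof of proposition~\ref{prop:S_n}: reduce to a subgroup on which injectivity is already known, and dispose of the small cases by hand. The noninjective cases are exactly $n\leq3$. Here $A_1$ and $A_2$ are trivial and admit no nontrivial homomorphism from any $C_n$, so they carry no geodesics and the Radon transform records no information at all; it therefore cannot be injective. The remaining small case is $A_3\cong C_3$, which is cyclic, so noninjectivity follows from lemma~\ref{lma:cyclic} (equivalently from theorem~\ref{thm:abelian}). This settles the implication that injectivity forces $n\geq4$.

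For the converse I would exhibit a non-cyclic abelian subgroup of $A_n$ for every $n\geq4$ and then invoke theorem~\ref{thm:abelian} together with lemma~\ref{lma:subgroup}. The natural choice is the Klein four-group $V=\{e,(12)(34),(13)(24),(14)(23)\}\cong C_2\times C_2$. First I would verify that each of these double transpositions is even, so that $V\leq A_4$, and that $A_4\leq A_n$ for all $n\geq4$ by letting $A_4$ permute the first four symbols while fixing the rest; this inclusion preserves parity since it does not change the cycle type. Hence $A_n$ contains a copy of $C_2\times C_2$ whenever $n\geq4$.

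With the subgroup identified, theorem~\ref{thm:abelian} gives injectivity of the Radon transform on $C_2\times C_2$, and lemma~\ref{lma:subgroup} then propagates injectivity up to $A_n$. I do not expect a genuine obstacle here: the argument is entirely parallel to the dihedral and symmetric cases already treated, and the only points that require any care are the degenerate bookkeeping for the trivial groups $A_1$ and $A_2$ and the elementary check that the three chosen involutions are even and mutually commuting. Once these routine verifications are in place the claimed equivalence follows at once.
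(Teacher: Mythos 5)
Your proof is correct and follows essentially the same route as the paper: the small cases are dispatched via cyclicity (the paper cites theorem~\ref{thm:abelian} for $A_2$ and $A_3$), and injectivity for $n\geq4$ comes from the Klein four-subgroup $C_2\times C_2\leq A_4\leq A_n$ together with lemma~\ref{lma:subgroup} and lemma~\ref{lma:square}. Your slightly more careful handling of the trivial groups $A_1$ and $A_2$ (which carry no geodesics, so the transform is vacuously noninjective) is a harmless refinement of the same argument.
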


\begin{proof}
The groups~$A_2$ and~$A_3$ are cyclic, so the Radon transform is not injective on them by theorem~\ref{thm:abelian}.
The group~$A_4$ contains $C_2\times C_2$ as a subgroup and~$A_4$ is a subgroup of~$A_n$ for $n\geq5$, so the claim follows from lemmas~\ref{lma:subgroup} and~\ref{lma:square}.
\end{proof}

\subsection{Groups with a noninjective Radon transform}

Let us start with a more explicit example and then present a more general condition for recognizing groups where the Radon transform is not injective.

\begin{proposition}
\label{prop:Dic_n}
The Radon transform is not injective on any dicyclic group~$Dic_n$, $n\geq2$.
\end{proposition}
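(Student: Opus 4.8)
The plan is to exploit the subgroup structure of $Dic_n$: although the group is nonabelian, all of its subgroups of prime order are contained in a single cyclic subgroup, on which the Radon transform is already known to be noninjective by lemma~\ref{lma:cyclic}. Since by lemma~\ref{lma:prime} only geodesics of prime length matter, and these are precisely the cosets of the prime-order subgroups, a nonzero kernel element on that cyclic subgroup, extended by zero to the rest of $Dic_n$, should produce a nonzero element of $\ker\rt$.

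First I would record the structure. Write $Dic_n=\langle a,b\mid a^{2n}=e,\ b^2=a^n,\ b^{-1}ab=a^{-1}\rangle$, a group of order $4n$ containing the cyclic subgroup $\langle a\rangle\cong C_{2n}$ of index two. A direct computation using $ba^k=a^{-k}b$ gives $(a^kb)^2=b^2=a^n$ for every $k$, so each of the $2n$ elements outside $\langle a\rangle$ has order $4$. Hence every element of prime order lies in $\langle a\rangle$: the unique involution is the central element $z=a^n\in\langle a\rangle$, and any element of odd prime order lies in $\langle a\rangle$ because the remaining coset consists entirely of elements of order $4$. As $\langle a\rangle$ is cyclic it has exactly one subgroup of each prime order, so these are all the prime-order subgroups of $Dic_n$ and each is contained in $\langle a\rangle$.

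Now I would carry out the reduction. Let $P\le\langle a\rangle$ be a subgroup of prime order. Since $P\subseteq\langle a\rangle$ and $\langle a\rangle$ has index two, every coset $xP$ lies wholly inside $\langle a\rangle$ or wholly inside $\langle a\rangle b$, and the cosets inside $\langle a\rangle$ are exactly the prime-length geodesics of $C_{2n}$ under the identification $a^k\leftrightarrow k$. By lemma~\ref{lma:cyclic} the Radon transform on $C_{2n}$ is not injective, so I pick a nonzero $g\colon C_{2n}\to\C$ with vanishing cyclic Radon transform and define $f\colon Dic_n\to\C$ by $f(a^k)=g(k)$ and $f(a^kb)=0$. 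For any prime $p$ and any geodesic of length $p$, the associated coset of $P$ either lies in $\langle a\rangle b$, where $f$ vanishes and the sum is trivially zero, or lies in $\langle a\rangle$, where the sum equals the corresponding cyclic Radon sum of $g$ and so also vanishes. Thus $\rt_pf=0$ for every prime $p$, whence $f\in\ker\rt$ by lemma~\ref{lma:prime}, while $f\neq0$ since $g\neq0$; this exhibits a nontrivial kernel element.

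The one genuinely group-theoretic point, and the main thing to get right, is the claim that every prime-order subgroup of $Dic_n$ lies in $\langle a\rangle$, which rests entirely on the computation $(a^kb)^2=a^n$ showing the outer coset to consist of elements of order $4$. Once this is secured the rest is bookkeeping, and it reflects a general principle: noninjectivity of the Radon transform on a subgroup lifts to the whole group whenever that subgroup contains every cyclic subgroup of prime order, a statement complementary to lemma~\ref{lma:subgroup}.
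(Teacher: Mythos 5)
Your proof is correct and follows essentially the same route as the paper: both verify via $(a^kb)^2=a^n$ that every element outside $\langle a\rangle\cong C_{2n}$ has order four, conclude that all prime-order subgroups (hence all relevant geodesics, by lemma~\ref{lma:prime}) sit inside $\langle a\rangle$ or its complementary coset, and then extend a nonzero kernel element of the Radon transform on $C_{2n}$ by zero. The only cosmetic difference is that you cite lemma~\ref{lma:cyclic} where the paper cites theorem~\ref{thm:abelian} for the existence of that kernel element, which is immaterial.
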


\begin{proof}
The dicyclic group~$Dic_n$ is generated by two elements~$a$ and~$b$ that satisfy $a^{2n}=e$, $b^2=a^n$ and $ab=ba^{-1}$.
The elements of~$Dic_n$ can be written uniquely as~$b^ma^k$ with $0\leq m\leq1$ and $0\leq k\leq 2n-1$, and so the group has order~$4n$.

Let~$A$ be the cyclic subgroup spanned by~$a$.
This subgroup is isomorphic to~$C_{2n}$.
All cyclic subgroups of~$Dic_n$ that have order other than four are contained in~$A$; it is easy to check that every element in $Dic_n\setminus A$ has order four.
Therefore all geodesics of prime order are contained in~$A$ or its complement.

Now let $f:C_{2n}\to\C$ be a nontrivial function in the kernel of the Radon transform (such functions exist by theorem~\ref{thm:abelian}).
We may naturally regard~$f$ as a function on~$A$ and extend it by zero to~$Dic_n$.
The resulting function is in the kernel of the Radon transform on~$Dic_n$.
\end{proof}

\begin{proposition}
\label{prop:cyclic-subgroups}
Let~$G$ be a finite group and let~$S$ denote the set of its cyclic subgroups of prime order.
Then the Radon transform is not injective on~$G$ if
\begin{equation}
\label{eq:cyclic-subgroups}
\sum_{H\in S}\abs{H}^{-1}
<
1+\frac{\abs{S}-1}{\abs{G}}.
\end{equation}
\end{proposition}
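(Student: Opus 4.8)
The plan is to prove non-injectivity by a rank count, producing a nontrivial kernel through the rank--nullity theorem: since~$\rt$ is a linear map on the $\abs{G}$-dimensional space $G^\C$, it suffices to show that the rank of~$\rt$ is strictly less than~$\abs{G}$. By lemma~\ref{lma:prime} I may restrict attention to geodesics of prime length, and for a prime~$p$ a nontrivial homomorphism $\gamma\in\Gamma_p$ is injective, so $\rt_pf(x,\gamma)$ depends only on the coset $x\gamma(C_p)$, a coset of the prime-order cyclic subgroup $\gamma(C_p)$, i.e. of some $H\in S$. Writing $L_{H,C}(f)=\sum_{y\in C}f(y)$ for the sum functional attached to a coset~$C$ of $H\in S$, the rank of~$\rt$ equals the dimension of the span of the functionals $\{L_{H,C}\}$ inside the dual of~$G^\C$.

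Next I would count these functionals and bound the dimension of their span. Each $H\in S$ has $\abs{G}/\abs{H}$ distinct cosets, so there are $\sum_{H\in S}\abs{G}/\abs{H}$ functionals in total. They are, however, linearly dependent. For a fixed $H\in S$ the cosets of~$H$ partition~$G$, so
\begin{equation}
\sum_{C}L_{H,C}=T,
\end{equation}
where the sum is over the cosets of~$H$ and $T(f)=\sum_{y\in G}f(y)$ is the total-sum functional (this is exactly formula~\eqref{eq:group-sum}). Fixing one subgroup $H_0\in S$ and subtracting its relation from that of each other $H\in S$ yields $\abs{S}-1$ dependencies of the form $\sum_{C}L_{H,C}-\sum_{C'}L_{H_0,C'}=0$. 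These are linearly independent: the dependency attached to~$H$ carries coefficient~$+1$ on the cosets of~$H$ and~$-1$ on those of~$H_0$, and the positive parts for distinct $H\ne H_0$ are supported on disjoint families of cosets, so a vanishing combination forces all coefficients to be zero. Hence the space of dependencies has dimension at least $\abs{S}-1$, and therefore
\begin{equation}
\operatorname{rank}\rt
\leq
\sum_{H\in S}\frac{\abs{G}}{\abs{H}}-(\abs{S}-1).
\end{equation}

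Finally I would unwind the inequality. Dividing the bound by~$\abs{G}$, the right-hand side is smaller than~$\abs{G}$, equivalently $\operatorname{rank}\rt<\abs{G}$, as soon as
\begin{equation}
\sum_{H\in S}\abs{H}^{-1}-\frac{\abs{S}-1}{\abs{G}}<1,
\end{equation}
which is precisely the hypothesis~\eqref{eq:cyclic-subgroups}. Under that hypothesis rank--nullity gives $\dim\ker\rt=\abs{G}-\operatorname{rank}\rt>0$, so $\ker\rt\neq0$ and~$\rt$ is not injective. The one step requiring genuine care, rather than routine bookkeeping, is the independence of the $\abs{S}-1$ coset relations; I expect this to follow cleanly from the observation that cosets of distinct prime-order subgroups are distinct index sets, so the corresponding relations have disjoint supports on their positive parts. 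Note also that this gives only a sufficient condition: additional hidden relations among the $L_{H,C}$ may lower the rank further, which is why the converse is not claimed.
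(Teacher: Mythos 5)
Your proposal is correct and follows essentially the same route as the paper: restrict to prime-length geodesics via lemma~\ref{lma:prime}, identify the Radon data with coset sums over the subgroups in~$S$, use~\eqref{eq:group-sum} to exhibit $\abs{S}-1$ relations tying them to the total sum, and conclude by dimension counting that the rank is at most $1+\sum_{H\in S}(\abs{G}/\abs{H}-1)<\abs{G}$. The only difference is that you spell out the linear independence of the $\abs{S}-1$ relations, which the paper leaves implicit.
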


\begin{proof}
It suffices to consider geodesics of prime order by lemma~\ref{lma:prime}.
Let~$p$ be a prime and let $\gamma:C_p\to G$ be a nontrivial homomorphism.
The image~$\gamma(C_p)$ is a cyclic subgroup of order~$p$.
The group~$G$ is composed of~$\abs{G}/p$ left translates of~$\gamma(C_p)$; these are precisely the geodesics corresponding to the homomorphism~$\gamma$.

Let $f:G\to\C$ be any function.
Summing over the Radon transform of~$f$ on these geodesics gives the sum of~$f$ over the whole~$G$; see~\eqref{eq:group-sum}.
The number of independent numbers contained in the Radon transform of~$f$ is therefore at most $1+\sum_{H\in S}(\abs{G}/\abs{H}-1)$, where the~$\pm1$s correspond to the sum of~$f$ over the whole group.
If this number is strictly less than~$\abs{G}$, the Radon transform has nontrivial kernel for dimensional reasons.
\end{proof}

Groups satisfying the condition of proposition~\ref{prop:cyclic-subgroups} include all cyclic groups and the dicyclic groups~$Dic_n$ for $2\leq n\leq14$.
The smallest dicyclic group not satisfying~\eqref{eq:cyclic-subgroups} is~$Dic_{15}$ with an inequality, and the smallest one with the opposite inequality is~$Dic_{30}$.
Taking~$n$ to be a primorial (a product of distinct primes less than a given number), we see that the inequality~\eqref{eq:cyclic-subgroups} can be violated as badly as we wish.
Nevertheless, the Radon transform remains noninjective by proposition~\ref{prop:Dic_n}, which demonstrates that the condition of proposition~\ref{prop:cyclic-subgroups} is far from sharp.

\section{A representation theoretical approach}
\label{sec:rep}

\subsection{Representations on finite groups}

As before, let~$G$ be a finite group.
If~$V$ is a vector space, a homomorphism $\rho:G\to GL(V)$ is called a representation of the group~$G$ and~$V$ is called the representation space of~$\rho$.
The underlying field of the vector space~$V$ could be anything, but we restrict our attention to the complex numbers.
Representation theory is particularly convenient over this field.

We will only consider finite dimensional representations, and we refer to the dimension of the space~$V$ as the dimension~$\dim(\rho)$ of the representation~$\rho$.
The space~$V$ can be equipped with an inner product so that~$\rho(x)$ is unitary for all $x\in G$.
We shall identify~$V$ with~$\C^{\dim(\rho)}$ and assume that~$\rho$ is unitary with respect to the usual inner product.

Two representations~$\rho$ and~$\sigma$ are equivalent if they have the same dimension and there is a unitary change of basis~$U$ so that $U\rho(x)U^{-1}=\sigma(x)$ for all $x\in G$.
The representation~$\rho$ reducible if there is a nontrivial splitting so that $V=V_1\oplus V_2$ and $\rho(x)V_i=V_i$ for all $x\in G$ and $i=1,2$.
A representation is faithful if it is an injective homomorphism.

We denote by~$\hat G$ the set of all irreducible representations of the group~$G$, including only one representation from each equivalence class.
The representations in~$\hat G$ can be reconstructed by decomposing the regular representation of~$G$.

For basics of representation theory of finite groups we refer to~\cite{S:rep-finite-grp}.
The representation theoretical approach presented here is similar to the one presented in~\cite[Section~3]{I:lie-radon} in the context of Lie groups.

\subsection{Fourier analysis on finite groups}

We wish to define the Fourier transform on finite groups analogously to the definition on Lie groups~\cite{I:lie-radon,RT:psdo-symm}.

We denote $\widehat{\C^G}=\bigoplus_{\rho\in\hat G}\C^{(\dim(\rho))^2}$.
The space~$\C^{(\dim(\rho))^2}$ is to be understood as the space of square matrices operating on the representation space of~$\rho$.
An alternative, less coordinate dependent option would be to define~$\widehat{\C^G}$ as the direct sum of the spaces of endomorphism of the representation spaces of the representations in~$\hat G$.

We define the Fourier transform as map $\C^G\to\widehat{\C^G}$, $f\mapsto\tilde f$, given by
\begin{equation}
\tilde f(\rho)
=
\sum_{x\in G}f(x)\rho(x)
\end{equation}
for $f\in\C^G$ and~$\rho$.
Note that~$\tilde f(\rho)$ is indeed an endomorphism of the representation space of~$\rho$.
We will denote the inner product of two matrices by $A:B=\tr(A^TB)$.

If the group~$G$ is cyclic, the Fourier transform defined above is just the usual discrete Fourier transform.
For functions depending on several variables only one of which ranges in~$G$, the Fourier transform is taken with respect to that variable.

\begin{lemma}
\label{lma:char-sum}
For $x\in G$ we have
\begin{equation}
\sum_{\rho\in\hat G}\dim(\rho)\tr(\rho(x))
=
\begin{cases}
\abs{G}, & x=e \\
0, & x\neq e.
\end{cases}
\end{equation}
\end{lemma}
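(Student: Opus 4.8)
The plan is to recognize the left-hand side as the character of the regular representation of~$G$ evaluated at~$x$, and then to compute that character in two different ways.

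First I would introduce the regular representation $\rho_{\mathrm{reg}}$ of~$G$ on the space~$\C^G$, where~$G$ acts on the standard basis $(e_g)_{g\in G}$ by left translation, $\rho_{\mathrm{reg}}(x)e_g=e_{xg}$. Its character is immediate to compute: $\tr(\rho_{\mathrm{reg}}(x))$ counts the basis vectors fixed by~$x$, and $xg=g$ forces $x=e$. Hence $\tr(\rho_{\mathrm{reg}}(x))=\abs{G}$ when $x=e$ and $\tr(\rho_{\mathrm{reg}}(x))=0$ otherwise, which is exactly the right-hand side of the claimed identity.

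Next I would invoke the standard decomposition of the regular representation into irreducibles: each $\rho\in\hat G$ occurs in~$\rho_{\mathrm{reg}}$ with multiplicity equal to $\dim(\rho)$. Passing to characters in this decomposition gives
\begin{equation}
\tr(\rho_{\mathrm{reg}}(x))
=
\sum_{\rho\in\hat G}\dim(\rho)\tr(\rho(x)),
\end{equation}
and comparing this with the direct computation of $\tr(\rho_{\mathrm{reg}}(x))$ finishes the proof.

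The only substantive input is the multiplicity formula for the regular representation, which is part of the basic theory we cite in~\cite{S:rep-finite-grp}; since both ingredients are entirely standard, I do not expect any genuine obstacle. An alternative route that avoids naming $\rho_{\mathrm{reg}}$ explicitly would use column orthogonality of the irreducible characters together with $\tr(\rho(e))=\dim(\rho)$, but the argument through the regular representation is cleaner and more transparent.
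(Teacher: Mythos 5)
Your proof is correct, but it takes a different route from the paper's. The paper derives the identity from the \emph{column (second) orthogonality relation} for irreducible characters, $\sum_{\rho\in\hat G}\overline{\tr(\rho(y))}\tr(\rho(x))=\abs{G}/c(x)$ when $x$ and $y$ are conjugate and $0$ otherwise, specialized to $y=e$ together with $\tr(\rho(e))=\dim(\rho)$ --- precisely the alternative you mention in your closing sentence. You instead identify the left-hand side as the character of the regular representation, computed once via the fixed-point count ($xg=g$ forces $x=e$) and once via the decomposition of $\rho_{\mathrm{reg}}$ into irreducibles with multiplicities $\dim(\rho)$. Both inputs are standard and are covered by the cited reference~\cite{S:rep-finite-grp}. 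Your argument is arguably more self-contained, since the multiplicity formula follows from the first (row) orthogonality relation alone and the fixed-point computation is elementary, whereas the paper leans on the second orthogonality relation as a black box; the paper's version is shorter on the page but cites a slightly heavier fact. Either proof is acceptable, and nothing in the rest of the paper depends on which derivation is used.
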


\begin{proof}
The mapping $x\mapsto\tr(\rho(x))$ is called the character of the representation~$\rho$.
One of the standard orthogonality results for characters (see eg.~\cite[Section~2.3]{S:rep-finite-grp}) states that
\begin{equation}
\sum_{\rho\in\hat G}\overline{\tr(\rho(y))}\tr(\rho(x))
=
\begin{cases}
\abs{G}/c(x), & x\text{ and }y\text{ conjugate} \\
0, & \text{otherwise},
\end{cases}
\end{equation}
where~$c(x)$ is the size of the conjugacy class of~$x$ in~$G$.
Choosing $y=e$ and observing that $\tr(\rho(e))=\dim(\rho)$ gives the desired result.
\end{proof}

\begin{lemma}
\label{lma:fourier-bij}
The Fourier transform is a linear bijection.
\end{lemma}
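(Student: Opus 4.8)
The plan is to prove bijectivity by combining a dimension count with injectivity; linearity is immediate from the definition, since $f\mapsto\sum_{x\in G}f(x)\rho(x)$ is linear in~$f$ for each fixed~$\rho$. The whole argument rests on lemma~\ref{lma:char-sum}, which conveniently supplies both ingredients I need.

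First I would compare dimensions. The domain~$\C^G$ has dimension~$\abs{G}$, while the codomain $\widehat{\C^G}=\bigoplus_{\rho\in\hat G}\C^{(\dim(\rho))^2}$ has dimension $\sum_{\rho\in\hat G}(\dim(\rho))^2$. Setting $x=e$ in lemma~\ref{lma:char-sum} and using $\tr(\rho(e))=\dim(\rho)$ gives exactly $\sum_{\rho\in\hat G}(\dim(\rho))^2=\abs{G}$, so the two spaces have equal finite dimension. It therefore suffices to prove injectivity, as an injective linear map between finite-dimensional spaces of the same dimension is automatically a bijection.

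To prove injectivity I would exhibit an explicit inversion formula. The claim is that
\[
f(x)=\frac{1}{\abs{G}}\sum_{\rho\in\hat G}\dim(\rho)\tr\bigl(\rho(x^{-1})\tilde f(\rho)\bigr)
\]
for every $x\in G$. Substituting the definition $\tilde f(\rho)=\sum_{y\in G}f(y)\rho(y)$ and using $\rho(x^{-1})\rho(y)=\rho(x^{-1}y)$, the right-hand side becomes $\frac{1}{\abs{G}}\sum_{y\in G}f(y)\sum_{\rho\in\hat G}\dim(\rho)\tr(\rho(x^{-1}y))$ after interchanging the two finite sums. By lemma~\ref{lma:char-sum} the inner sum equals~$\abs{G}$ when $x^{-1}y=e$, that is $y=x$, and vanishes otherwise, so the whole expression collapses to~$f(x)$. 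In particular $\tilde f=0$ forces $f=0$, which is the desired injectivity, and indeed the displayed formula recovers~$f$ from~$\tilde f$ outright.

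The argument is short because all the representation-theoretic weight has been absorbed into lemma~\ref{lma:char-sum}; the only genuine step is recognizing that both the dimension identity and the inversion formula are the two cases $x=e$ and general~$x$ of that single orthogonality relation. The one point requiring a little care is the interchange of the sums over $y\in G$ and $\rho\in\hat G$, which is harmless here as both are finite, together with the bookkeeping $\rho(x^{-1})\rho(y)=\rho(x^{-1}y)$ that makes the character appear inside the trace.
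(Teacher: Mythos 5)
Your proof is correct. It shares the paper's overall skeleton -- linearity is immediate, both spaces have the same finite dimension, so injectivity suffices -- but you establish injectivity by a different mechanism. The paper proves a Plancherel-type identity, $\frac{1}{\abs{G}}\sum_{\rho\in\hat G}\dim(\rho)\,\tilde f(\rho)^*:\tilde f(\rho)=\sum_{x\in G}\abs{f(x)}^2$, so the Fourier transform is an isometry for a suitable inner product on $\widehat{\C^G}$ and in particular injective; you instead write down the explicit inversion formula $f(x)=\frac{1}{\abs{G}}\sum_{\rho\in\hat G}\dim(\rho)\tr\bigl(\rho(x^{-1})\tilde f(\rho)\bigr)$ and verify it by the same orthogonality relation. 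Both arguments are powered entirely by lemma~\ref{lma:char-sum}, applied once with a general group element and once at the identity; your version has the small additional merit of deriving the dimension equality $\sum_{\rho\in\hat G}(\dim\rho)^2=\abs{G}$ from that lemma rather than quoting it, and of producing an explicit inverse map, whereas the paper's version identifies the natural weighted inner product on the Fourier side, which is the more useful byproduct if one wants to do harmonic analysis there. Either route is complete and correct.
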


\begin{proof}
We will actually prove that the Fourier transform is an isometry when~$\widehat{\C^G}$ is equipped with a suitable inner product.
This suffices since the Fourier transform is a linear mapping between two spaces of the same (finite) dimension.

Let $f,g\in\C^G$.
Using lemma~\ref{lma:char-sum} and $\rho(x)^*:\rho(y)=\tr(\rho(x^{-1}y))$ we have
\begin{equation}
\begin{split}
&\frac{1}{\abs{G}}\sum_{\rho\in\hat G}\dim(\rho)\tilde f(\rho)^*:\tilde f(\rho)
\\&\quad=
\sum_{x,y\in G}\overline{f(x)}f(y)
\frac{1}{\abs{G}}\sum_{\rho\in\hat G}\dim(\rho)\rho(x)^*:\rho(y)
\\&\quad=
\sum_{x\in G}\overline{f(x)}f(x).
\qedhere
\end{split}
\end{equation}
\end{proof}

\subsection{The Radon transform}

For a representation $\rho\in\hat G$ and a homomorphism $\gamma\in\Gamma_n$ we denote
\begin{equation}
I(\rho,\gamma)
=
\sum_{t\in C_n}\rho(\gamma(t)).
\end{equation}
The matrix $I(\rho,\gamma)$ is simply the sum of the representation~$\rho$ over the geodesic~$\gamma$.
Observe that if~$\gamma^{-1}$ denotes the pointwise inverse of~$\gamma$, we have $I(\rho,\gamma^{-1})=I(\rho,\gamma)=I(\rho,\gamma)^*$.

The next lemma provides the connection between representation theory and the Radon transform.

\begin{lemma}
\label{lma:fourier-radon}
If $f\in\C^G$, $\rho\in\hat G$ and $\gamma\in\Gamma_n$, we have $\widetilde{\rt f}(\rho,\gamma)=\tilde f(\rho)I(\rho,\gamma)$, where the Fourier transform is taken with respect to the first variable of~$\rt f$.
\end{lemma}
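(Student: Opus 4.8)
The plan is to unwind both definitions and then perform a single change of summation variable on the group. Taking the Fourier transform in the first variable means computing
\begin{equation}
\widetilde{\rt f}(\rho,\gamma)
=
\sum_{x\in G}\rt_nf(x,\gamma)\rho(x)
=
\sum_{x\in G}\sum_{t\in C_n}f(x\gamma(t))\rho(x),
\end{equation}
so the entire content of the lemma is to recognize this right-hand side as $\tilde f(\rho)I(\rho,\gamma)$.

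First I would substitute $y=x\gamma(t)$ for each fixed $t\in C_n$. Since left translation by $\gamma(t)$ is a bijection of~$G$, the inner sum over~$x$ becomes a sum over $y\in G$, and writing $x=y\gamma(t)^{-1}$ together with the homomorphism property of~$\rho$ gives $\rho(x)=\rho(y)\rho(\gamma(t))^{-1}$. Here the order matters, because the matrices need not commute: the factor coming from $\gamma(t)$ lands on the \emph{right}. Interchanging the two finite sums then yields
\begin{equation}
\widetilde{\rt f}(\rho,\gamma)
=
\sum_{t\in C_n}\Bigl(\sum_{y\in G}f(y)\rho(y)\Bigr)\rho(\gamma(t))^{-1}
=
\tilde f(\rho)\sum_{t\in C_n}\rho(\gamma(t)^{-1}),
\end{equation}
where I have pulled the $t$-independent factor $\tilde f(\rho)=\sum_{y\in G}f(y)\rho(y)$ out to the left.

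It then remains to identify $\sum_{t\in C_n}\rho(\gamma(t)^{-1})$ with $I(\rho,\gamma)$. This is exactly the symmetry recorded just before the lemma: as $t\mapsto t^{-1}$ is a bijection of~$C_n$ and~$\gamma$ is a homomorphism, the pointwise inverse~$\gamma^{-1}$ runs over the same cyclic image as~$\gamma$, so $\sum_{t\in C_n}\rho(\gamma(t)^{-1})=I(\rho,\gamma^{-1})=I(\rho,\gamma)$. Substituting this back gives the claimed identity $\widetilde{\rt f}(\rho,\gamma)=\tilde f(\rho)I(\rho,\gamma)$.

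I do not expect any genuine obstacle here, since the statement is essentially a bookkeeping computation. The only point requiring care is the non-commutativity of matrix products, which forces $I(\rho,\gamma)$ to appear on the right of $\tilde f(\rho)$ rather than on the left; getting the translation factor onto the correct side during the change of variables is the one place where an ordering error could slip in.
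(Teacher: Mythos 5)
Your proposal is correct and follows essentially the same route as the paper's proof: the change of variables $y=x\gamma(t)$, the right-placement of $\rho(\gamma(t))^{-1}$, and the identification $\sum_{t\in C_n}\rho(\gamma(t)^{-1})=I(\rho,\gamma^{-1})=I(\rho,\gamma)$ via the symmetry noted before the lemma. Your explicit remark about the non-commutativity forcing $I(\rho,\gamma)$ onto the right is exactly the one subtlety in the computation.
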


\begin{proof}
The proof is a mere calculation:
\begin{equation}
\begin{split}
\widetilde{\rt f}(\rho,\gamma)
&=
\sum_{x\in G}
\sum_{t\in C_n}
f(x\gamma(t))\rho(x)
\\&=
\sum_{y\in G}
f(y)\rho(y)
\sum_{t\in C_n}
\rho(\gamma^{-1}(t))
\\&=
\tilde f(\rho)I(\rho,\gamma).
\qedhere
\end{split}
\end{equation}
\end{proof}

Lemma~\ref{lma:fourier-radon} provides a new approach to the Radon transform.
We want to study whether the Radon transform is injective, that is, whether $\rt f=0$ implies $f=0$.
This problem has now been reduced to asking the following: if $\rho\in\hat G$ and~$A$ is a $\dim(\rho)\times\dim(\rho)$ matrix satisfying $AI(\rho,\gamma)=0$ for all nontrivial homomorphisms~$\gamma$, does~$A$ necessarily vanish?

To study this problem further, let us give the matrix~$I(\rho,\gamma)$ a description.

\begin{lemma}
\label{lma:I-proj}
Suppose $\gamma\in\Gamma_n$ and $\rho\in\hat G$.
Let $g\in S_n$ be a generating element and let~$V$ be the representation space of~$\rho$.
Then $n^{-1}I(\rho,\gamma):V\to V$ is the orthogonal projection to the subspace fixed by~$\rho(\gamma(g))$.
\end{lemma}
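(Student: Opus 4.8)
The plan is to reduce the entire matrix $I(\rho,\gamma)$ to a single unitary matrix $M=\rho(\gamma(g))$ and then recognise $n^{-1}I(\rho,\gamma)$ as the group-averaging projector $\frac1n\sum_{k=0}^{n-1}M^k$. First I would note that since $g$ generates $C_n$ and $\gamma$ is a homomorphism, every element of $C_n$ is a power of $g$ and $\rho(\gamma(g^k))=M^k$; hence $I(\rho,\gamma)=\sum_{k=0}^{n-1}M^k$. Because $\rho$ is unitary and $g^n=e$, the matrix $M$ is unitary and satisfies $M^n=\mathrm{Id}$; in particular $M^*=M^{-1}$, a fact I will use twice below.

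Next I would set $P=n^{-1}\sum_{k=0}^{n-1}M^k$ and verify directly that $P$ is an orthogonal projection, i.e.\ a self-adjoint idempotent. Idempotency follows from the identity $MP=P$: reindexing the sum and using $M^n=\mathrm{Id}$ gives $MP=n^{-1}\sum_{k=0}^{n-1}M^{k+1}=P$, so $M^kP=P$ for every $k$ and therefore $P^2=n^{-1}\sum_k M^kP=P$. Self-adjointness follows from $M^*=M^{-1}$ together with the substitution $k\mapsto n-k$, which gives $P^*=n^{-1}\sum_k (M^*)^k=n^{-1}\sum_k M^{-k}=P$. A self-adjoint idempotent is precisely the orthogonal projection onto its range.

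Finally I would identify the range of $P$ with the fixed subspace $F=\{v:Mv=v\}$ of $M=\rho(\gamma(g))$. If $Mv=v$ then $Pv=n^{-1}\sum_k M^kv=v$, so $F$ is contained in the range of $P$. Conversely, if $v$ lies in the range of $P$ then $v=Pv$, and the relation $MP=P$ yields $Mv=MPv=Pv=v$, so $v\in F$. Hence the range of $P$ equals $F$, and $P=n^{-1}I(\rho,\gamma)$ is the orthogonal projection onto the subspace fixed by $\rho(\gamma(g))$.

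I do not expect a genuine obstacle here: this is the standard averaging-over-a-cyclic-group computation, and the spectral alternative (diagonalise the unitary $M$ and sum the geometric series $\sum_k\lambda^k$ over each eigenvalue $\lambda$, which is $n$ for $\lambda=1$ and $0$ otherwise) would give the same conclusion. The only points needing a little care are the bookkeeping in the two reindexing steps, where $M^n=\mathrm{Id}$ is used, and the observation that the statement is consistent across generators: replacing $g$ by another generator $g^j$ with $\gcd(j,n)=1$ replaces $M$ by $M^j$, but since the eigenvalues of $M$ are $n$th roots of unity, $M^jv=v$ is equivalent to $Mv=v$, so $F$ is unchanged — as it must be, since $I(\rho,\gamma)$ itself does not depend on the choice of $g$.
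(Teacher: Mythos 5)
Your proof is correct, but it takes a different route from the paper's. The paper diagonalizes the unitary matrix $M=\rho(\gamma(g))$, observes that its eigenvalues are $n$th roots of unity, and applies the scalar identity that $z+z^2+\dots+z^n$ equals $n$ when $z=1$ and $0$ otherwise — a three-line spectral argument. You instead verify directly that $P=n^{-1}\sum_{k=0}^{n-1}M^k$ is a self-adjoint idempotent via the relation $MP=P$ and the reindexing $k\mapsto n-k$, and then identify its range with the fixed subspace; this is the standard group-averaging computation, done basis-free. Each approach has its merits: the paper's is shorter and makes the eigenspace decomposition of $I(\rho,\gamma)$ completely explicit, while yours isolates exactly which hypotheses are used where (idempotency and the range identification need only $M^n=\mathrm{Id}$, so they survive in any representation over a field of characteristic not dividing $n$; only the \emph{orthogonality} of the projection uses unitarity). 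Your closing remark that the fixed subspace is independent of the choice of generator is a worthwhile sanity check that the paper does not make, and your steps all check out — note only that the lemma's ``$g\in S_n$'' is evidently a typo for $g\in C_n$, which you have silently and correctly repaired.
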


\begin{proof}
Since the matrix~$\rho(\gamma(g))$ is unitary, a unitary change of basis turns it into a diagonal matrix with~$n$th roots of unity as entries.
If~$z\in\C$ satisfies $z^n=1$, then $z+z^2+\dots+z^n$ is~$n$ if $z=1$ and zero otherwise.
Applying this to each entry in the diagonal matrix proves the claim.
\end{proof}

We have now found enough tools to give a second proof of theorem~\ref{thm:abelian} about the Radon transform on finite abelian groups.

\begin{proof}[{Second proof of theorem~\ref{thm:abelian}}]
Let~$G$ be a finite, abelian group of at least two elements.
All of its irreducible representations are one dimensional and we identify matrices with complex numbers.

If~$G$ is cyclic, there is a faithful irreducible representation~$\rho$.
Since $\rho(x)\neq1$ whenever $x\in G\setminus\{e\}$, lemma~\ref{lma:I-proj} implies that $I(\rho,\gamma)=0$ for all nontrivial homomorphisms $\gamma:C_n\to G$ for all~$n$.
If $f\in\C^G$ is such that $\tilde f(\rho)=1$ and $\tilde f(\sigma)=0$ for $\sigma\in\hat G\setminus\{\rho\}$, lemma~\ref{lma:fourier-radon} shows that $\widetilde{\rt f}=0$.
By lemma~\ref{lma:fourier-bij} this implies $\rt f=0$ although $f\neq0$, so the Radon transform is not injective.

Suppose then that~$G$ is not cyclic.
Then there is a cyclic group~$C_p$ so that~$G$ has a subgroup isomorphic to $C_p\times C_p$.
Let $\rho\in\hat G$.
Now $\sigma=\rho|_{C_p\times C_p}$ is a one dimensional representation of $C_p\times C_p$.
Every element in $C_p\times C_p$ has order~$1$ or~$p$, so every value of~$\sigma$ is a~$p$th root of unity.
There are only~$p$ such roots so~$\sigma$ cannot be faithful.
Therefore~$G$ has no faithful irreducible representations.


Now let $\rho\in\hat G$.
Since it is not faithful, there is $x\in G\setminus\{e\}$ so that $\rho(x)=1$.
Let~$n$ be the order of~$x$ and let $\gamma:C_n\to G$ be a homomorphism containing~$x$ in its image.
It follows from lemma~\ref{lma:I-proj} that $I(\rho,\gamma)=1$ and then from lemma~\ref{lma:fourier-radon} that $\rt f=0$ implies $f=0$.
\end{proof}

We can extract the following more general statements from the previous proof:
If~$G$ has a one dimensional faithful representation, the Radon transform cannot be injective.
But if no irreducible representation is faithful, the Radon transform is injective.

We can also give a representation theoretical characterization of the finite groups where the Radon transform is injective and characterize the kernel when it is not injective.
In order to do so, let us introduce some notation.

For $\rho\in\hat G$, let
\begin{equation}
k(\rho)=\bigcap_{p|\abs{G}}\bigcap_{\gamma\in\Gamma_p}\ker(I(\rho,\gamma))
\end{equation}
where the outer intersection is over all primes~$p$ that divide~$\abs{G}$.
Here $I(\rho,\gamma)$ is understood as a linear mapping $\C^{\dim(\rho)}\to\C^{\dim(\rho)}$ so~$k(\rho)$ is a subspace of~$\C^{\dim(\rho)}$.
Let~$K^*(\rho)$ denote the space of square matrices with the conjugate transpose of each row belonging to~$k(\rho)$.

The spaces~$k(\rho)$ and~$K^*(\rho)$ can be thought of as the vector and matrix kernels of the representation~$\rho$.
They describe the kernel of the Radon transform as we shall see next.

\begin{lemma}
\label{lma:I-K}
Let $\rho\in\hat G$.
A square matrix~$A$ satisfies $AI(\rho,\gamma)=0$ for all $\gamma\in\Gamma_p$ for all primes~$p$ if and only if $A\in K^*(\rho)$.
\end{lemma}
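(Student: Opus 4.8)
The plan is to unravel the matrix equation $AI(\rho,\gamma)=0$ one row at a time and transfer each row condition into a statement about the kernel of $I(\rho,\gamma)$ acting on a column vector, after which the definition of $K^*(\rho)$ can be matched up directly. Write $m=\dim(\rho)$ and let $r_1,\dots,r_m\in\C^{1\times m}$ denote the rows of~$A$. Since the $i$th row of $AI(\rho,\gamma)$ is $r_iI(\rho,\gamma)$, the equation $AI(\rho,\gamma)=0$ holds if and only if $r_iI(\rho,\gamma)=0$ for every~$i$.

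First I would rewrite the single-row condition $r_iI(\rho,\gamma)=0$ by passing to conjugate transposes. Setting $c_i=r_i^*$, which is exactly the conjugate transpose of the $i$th row of~$A$, we have $(r_iI(\rho,\gamma))^*=I(\rho,\gamma)^*c_i$. The crucial ingredient here is the Hermitian symmetry $I(\rho,\gamma)^*=I(\rho,\gamma)$ recorded after the definition of~$I$ (a consequence of the unitarity of~$\rho$ together with $\gamma^{-1}$ running over the same geodesic). Hence $r_iI(\rho,\gamma)=0$ is equivalent to $I(\rho,\gamma)c_i=0$, that is, to $c_i\in\ker I(\rho,\gamma)$.

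Finally I would intersect over all homomorphisms. Combining the two equivalences, $AI(\rho,\gamma)=0$ holds for every $\gamma\in\Gamma_p$ and every prime~$p$ if and only if each $c_i$ lies in $\bigcap_p\bigcap_{\gamma\in\Gamma_p}\ker I(\rho,\gamma)$. Here one should observe that $\Gamma_p$ is empty whenever $p\nmid\abs{G}$, since a nontrivial homomorphism $C_p\to G$ would force $p\mid\abs{G}$ by Lagrange's theorem; the empty intersection is the whole space, so the intersection over all primes coincides with the intersection over the primes dividing~$\abs{G}$, which is precisely~$k(\rho)$. Thus the condition says exactly that the conjugate transpose of each row of~$A$ lies in~$k(\rho)$, i.e.\ $A\in K^*(\rho)$, as desired. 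The only genuine content is the Hermitian symmetry of $I(\rho,\gamma)$; the remainder is bookkeeping about rows and conjugate transposes, so I expect the main care to go into keeping the transpose conventions consistent rather than into any real obstacle.
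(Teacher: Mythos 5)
Your proof is correct and follows essentially the same route as the paper's: both reduce the matrix equation to row conditions, pass to conjugate transposes using the Hermitian symmetry $I(\rho,\gamma)^*=I(\rho,\gamma)$, and then match the result against the definition of $K^*(\rho)$. Your extra observation that $\Gamma_p$ is empty for $p\nmid\abs{G}$, reconciling the quantifier over all primes with the intersection over primes dividing $\abs{G}$ in the definition of $k(\rho)$, is a small point the paper leaves implicit.
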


\begin{proof}
Recall that $I(\rho,\gamma)^*=I(\rho,\gamma)$.
The matrix~$A$ satisfies $AI(\rho,\gamma)=0$ if and only if each row~$a$ of~$A$ satisfies $I(\rho,\gamma)a^*=0$.
The claim follows now directly from the definition of~$K^*(\rho)$.
\end{proof}

For a representation $\rho:G\to GL(V)$, let
\begin{equation}
F_\rho=\{v\in V;\rho(x)v=v\text{ for some }x\in G\setminus\{e\}\}
\end{equation}
denote the set of fixed points.

\begin{lemma}
\label{lma:fixed-span}
Let~$G$ be a finite group and $\rho:G\to GL(V)$ an irreducible representation.
Then either~$V$ is spanned by~$F_\rho$ or $F_\rho=\{0\}$.
\end{lemma}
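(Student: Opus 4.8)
The statement is that the fixed-point set $F_\rho$ of an irreducible representation either spans $V$ or is just $\{0\}$. The natural strategy is to study the subspace $W=\operatorname{span}(F_\rho)\subseteq V$ and show it is $G$-invariant; then irreducibility forces $W=V$ or $W=\{0\}$, and in the latter case $F_\rho\subseteq W=\{0\}$ collapses to the claimed conclusion. So the whole proof reduces to one invariance check, and the plan is to verify that $\rho(y)$ maps $F_\rho$ (hence its span) into itself for every $y\in G$.

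\textbf{The invariance computation.}
The first step is to take a typical element $v\in F_\rho$, so there is some $x\in G\setminus\{e\}$ with $\rho(x)v=v$, and an arbitrary $y\in G$, and ask whether $\rho(y)v\in F_\rho$. The key observation is the conjugation identity: $\rho(x)v=v$ implies $\rho(yxy^{-1})\rho(y)v=\rho(y)\rho(x)v=\rho(y)v$, so $\rho(y)v$ is fixed by the element $yxy^{-1}$. Since $x\neq e$ and conjugation is an automorphism of $G$, the conjugate $yxy^{-1}$ is also different from $e$, so $\rho(y)v$ indeed lies in $F_\rho$. This shows $\rho(y)F_\rho\subseteq F_\rho$, and since $\rho(y)$ is linear it follows that $\rho(y)W\subseteq W$ for the span $W$. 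As $y$ was arbitrary, $W$ is a $G$-invariant subspace.

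\textbf{Concluding via irreducibility.}
With $W$ established as $G$-invariant, I would invoke irreducibility of $\rho$: the only invariant subspaces are $\{0\}$ and $V$. If $W=V$ we are in the first alternative (that $V$ is spanned by $F_\rho$). If $W=\{0\}$, then since $F_\rho\subseteq W$ we get $F_\rho=\{0\}$, the second alternative. These two cases exhaust the possibilities, completing the dichotomy.

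\textbf{The main obstacle.}
There is no serious analytic or combinatorial difficulty here; the only subtle point worth being careful about is confirming that $F_\rho$ is closed under the action in the correct sense, namely that $\rho(y)$ sends a vector fixed by \emph{some} nonidentity element to a vector fixed by \emph{some} (possibly different) nonidentity element. The phrase ``for some $x\in G\setminus\{e\}$'' in the definition of $F_\rho$ means $F_\rho$ need not be a subspace on its own, which is exactly why one passes to its span; the span is automatically a linear subspace, and the conjugation trick guarantees invariance of the set $F_\rho$ itself, which transfers to its span. The one thing to double-check is that $yxy^{-1}\neq e$, which is immediate but must be stated. Everything else is routine linear algebra.
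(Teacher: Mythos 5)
Your proof is correct and follows essentially the same route as the paper: the conjugation identity $\rho(yxy^{-1})\rho(y)v=\rho(y)v$ shows $F_\rho$ is $G$-stable, hence its span is an invariant subspace, and irreducibility gives the dichotomy. The only (immaterial) difference is that you apply irreducibility directly to the span, whereas the paper detours through Maschke's theorem to produce an invariant complement; your version is if anything slightly more direct, and your explicit remark that $yxy^{-1}\neq e$ is a point the paper leaves implicit.
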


\begin{proof}
If $F_\rho=\{0\}$, then~$F_\rho$ clearly does not span~$V$.
We will show that the converse implication is also true.

If $v\in F_\rho$, then $\rho(x)v=v$ for some $x\in G\setminus\{e\}$.
Then $\rho(yxy^{-1})\rho(y)v=\rho(y)v$ for any $y\in G$ and so $\rho(y)v\in F_\rho$ for all $y\in G$.
Thus~$F_\rho$ is $G$\mbox{-}invariant, and so is its linear span.
Now if~$F_\rho$ does not span~$V$, its span has a nontrivial $G$\mbox{-}invariant complement by Maschke's theorem.
But since~$\rho$ is irreducible, this complement must be~$V$ itself, so $F_\rho=\{0\}$.
\end{proof}

Let us denote the set of irreducible representations of~$G$ without fixed points by
\begin{equation}
FPF(G)
=
\{\rho\in\hat G;F_\rho=\{0\}\}.
\end{equation}

\begin{lemma}
\label{lma:k-or-F}
Let~$G$ be a finite group and $\rho:G\to GL(V)$ a unitary irreducible representation.
Then either $F_\rho=\{0\}$ and $k(\rho)=V$ or~$F_\rho$ spans~$V$ and $k(\rho)=0$.
\end{lemma}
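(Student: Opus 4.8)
The plan is to combine the projection description of $I(\rho,\gamma)$ in lemma~\ref{lma:I-proj} with the dichotomy of lemma~\ref{lma:fixed-span}. By lemma~\ref{lma:I-proj}, for each prime $p$ and each $\gamma\in\Gamma_p$ the matrix $I(\rho,\gamma)$ is a positive multiple of the orthogonal projection onto the subspace $W_\gamma\subseteq V$ fixed by $\rho(\gamma(g))$, where $g$ generates $C_p$. Consequently $\ker(I(\rho,\gamma))=W_\gamma^\perp$, and since the intersection of orthogonal complements is the orthogonal complement of the sum, the intersection defining $k(\rho)$ becomes
\[
k(\rho)=\bigcap_{p,\gamma}W_\gamma^\perp=\Bigl(\sum_{p,\gamma}W_\gamma\Bigr)^{\!\perp},
\]
the intersections being taken over all primes $p$ dividing $\abs{G}$ and all $\gamma\in\Gamma_p$.

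Next I would identify $\sum_{p,\gamma}W_\gamma$ with the span of $F_\rho$. First note that $F_\rho$ is the union of the fixed subspaces of $\rho(y)$ over elements $y$ of prime order: if $\rho(x)v=v$ for some $x\neq e$ of order $m$ and $p$ is a prime dividing $m$, then $y=x^{m/p}$ has order $p$ and still fixes $v$, so restricting to prime order loses nothing. As $\gamma$ ranges over $\Gamma_p$ (and $p$ over the primes dividing $\abs{G}$), the element $\gamma(g)$ runs through exactly the elements of order $p$ in $G$, and the fixed subspace depends only on the cyclic subgroup generated, since any two generators fix the same vectors. Hence the subspaces $W_\gamma$ are precisely the fixed subspaces whose union is $F_\rho$, and as the span of a union of subspaces equals their sum we obtain $\sum_{p,\gamma}W_\gamma=\operatorname{span}(F_\rho)$ and therefore $k(\rho)=\bigl(\operatorname{span}(F_\rho)\bigr)^\perp$.

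Finally I would apply lemma~\ref{lma:fixed-span}, which gives that either $F_\rho=\{0\}$ or $F_\rho$ spans $V$. In the first case $\operatorname{span}(F_\rho)=\{0\}$, so $k(\rho)=V$; in the second $\operatorname{span}(F_\rho)=V$, so $k(\rho)=\{0\}$. This is exactly the asserted alternative.

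I expect the only delicate point to be the bookkeeping in the middle step: making sure that passing to prime-order elements loses no fixed vectors, and that different generators of a prime-order subgroup yield the same fixed subspace, so that the family $\{W_\gamma\}$ matches exactly the fixed subspaces assembled in $F_\rho$. The remaining ingredients---the identity $\bigcap W_\gamma^\perp=(\sum W_\gamma)^\perp$ and the dichotomy of lemma~\ref{lma:fixed-span}---then do all the work.
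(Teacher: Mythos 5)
Your proof is correct and follows essentially the same route as the paper: both identify $k(\rho)$ as the orthogonal complement of $\operatorname{span}(F_\rho)$ using the projection description of $I(\rho,\gamma)$ from lemma~\ref{lma:I-proj} and then invoke the dichotomy of lemma~\ref{lma:fixed-span}. Your explicit reduction to prime-order elements via $y=x^{m/p}$ is in fact slightly more careful than the paper's appeal to a ``minimal geodesic through $e$ and $x$'', and it is exactly the bookkeeping needed because $k(\rho)$ is defined by intersecting only over prime-length geodesics.
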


\begin{proof}
We will show that~$k(\rho)$ and the span~$S$ of~$F_\rho$ are orthogonal complements of each other in~$V$.
%

Suppose $v\in V$ satisfies $\rho(x)v=v$ for some nontrivial $x\in G$.
If we take a minimal geodesic~$\gamma$ passing through~$e$ and~$x$, it follows from lemma~\ref{lma:I-proj} that~$I(\rho,\gamma)$ is an orthogonal projection to a subspace of~$V$ containing~$v$.
Therefore every vector in~$k(\rho)$ is orthogonal to every $v\in S$.
On the other hand, if a vector $w\in V$ is orthogonal to~$S$, it is annihilated by every~$I(\rho,\gamma)$ and thus $w\in k(\rho)$.
This proves the desired orthogonality.
\end{proof}

\begin{theorem}
\label{thm:rep-radon-ker}
Let~$G$ be a finite group and $f:G\to\C$.
Then $\rt f=0$ if and only if $\tilde f(\rho)\in K^*(\rho)$ for all $\rho\in\hat G$, or, equivalently, $\tilde f(\rho)=0$ for all $\rho\in\hat G\setminus FPF(G)$.
\end{theorem}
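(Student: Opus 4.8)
The plan is to assemble the characterization from the Fourier-side identities already established, passing first through the matrix kernel $K^*(\rho)$ and only at the very end translating into the fixed-point-free condition.

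First I would reduce the vanishing of $\rt f$ to a purely Fourier-theoretic statement. By lemma~\ref{lma:prime} the condition $\rt f=0$ is equivalent to $\rt_pf=0$ for every prime~$p$, which by the bijectivity of the Fourier transform (lemma~\ref{lma:fourier-bij}) is in turn equivalent to $\widetilde{\rt_p f}(\rho,\gamma)=0$ for all $\rho\in\hat G$ and all $\gamma\in\Gamma_p$. Applying lemma~\ref{lma:fourier-radon} rewrites each of these as $\tilde f(\rho)I(\rho,\gamma)=0$. Thus $\rt f=0$ holds precisely when, for each $\rho\in\hat G$, the matrix $A=\tilde f(\rho)$ annihilates every $I(\rho,\gamma)$ on the right, as $\gamma$ ranges over all prime-length geodesics.

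This is exactly the hypothesis of lemma~\ref{lma:I-K}, which tells me that $\tilde f(\rho)I(\rho,\gamma)=0$ for all prime-length~$\gamma$ if and only if $\tilde f(\rho)\in K^*(\rho)$. Since~$\rho$ was arbitrary, this already gives the first stated equivalence: $\rt f=0$ if and only if $\tilde f(\rho)\in K^*(\rho)$ for every $\rho\in\hat G$.

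For the second equivalence I would unwind the definition of $K^*(\rho)$ using the dichotomy of lemma~\ref{lma:k-or-F}. By definition a matrix lies in $K^*(\rho)$ exactly when the conjugate transpose of each of its rows lies in $k(\rho)$. If $\rho\in FPF(G)$, then lemma~\ref{lma:k-or-F} gives $k(\rho)=\C^{\dim(\rho)}$, the row condition is vacuous, $K^*(\rho)$ is the entire matrix space, and the constraint $\tilde f(\rho)\in K^*(\rho)$ imposes nothing. If instead $\rho\notin FPF(G)$, the same lemma gives $k(\rho)=0$, so $K^*(\rho)=\{0\}$ and the constraint becomes $\tilde f(\rho)=0$. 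Hence requiring $\tilde f(\rho)\in K^*(\rho)$ for all $\rho\in\hat G$ is the same as requiring $\tilde f(\rho)=0$ for all $\rho\in\hat G\setminus FPF(G)$, which is the second equivalence. There is no serious analytic obstacle here, since the genuine work is all packaged into the lemmas; the only point demanding care is the bookkeeping in this last step, namely that the ``rows in $k(\rho)$'' description of $K^*(\rho)$ collapses cleanly to ``all matrices'' or ``only the zero matrix'' according to the two alternatives of lemma~\ref{lma:k-or-F}. I would also note in passing that restricting attention to primes dividing~$\abs{G}$ costs nothing, since for $p\nmid\abs{G}$ the set $\Gamma_p$ is empty and so the corresponding conditions are vacuous.
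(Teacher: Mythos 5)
Your proposal is correct and follows the paper's own proof essentially step for step: reduce to prime lengths via lemma~\ref{lma:prime}, pass to the Fourier side via lemmas~\ref{lma:fourier-bij} and~\ref{lma:fourier-radon}, apply lemma~\ref{lma:I-K} for the first equivalence, and lemma~\ref{lma:k-or-F} for the second. The extra bookkeeping you supply for the $K^*(\rho)$ dichotomy and the vacuity of primes not dividing $\abs{G}$ is just a more explicit rendering of the same argument.
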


\begin{proof}
By lemma~\ref{lma:prime} it suffices to consider geodesics of prime length.
By lemmas~\ref{lma:fourier-bij} and~\ref{lma:fourier-radon} we know that $\rt f=0$ if and only if $\tilde f(\rho)I(\rho,\gamma)=0$ for all $\rho\in\hat G$ and $\gamma\in\Gamma_p$ for all primes~$p$.
Invoking lemma~\ref{lma:I-K} then shows that $\rt f=0$ if and only if $\tilde f(\rho)\in K^*(\rho)$ for all $\rho\in\hat G$.
The last claimed equivalence follows from lemma~\ref{lma:k-or-F}.
\end{proof}


\begin{theorem}
\label{thm:rep-radon-inj}
Let~$G$ be a finite group.
The following are equivalent:
\begin{enumerate}
\item The Radon transform is injecitve on~$G$.
\item $k(\rho)=0$ for all $\rho\in\hat G$.
\item $FPF(G)=\emptyset$.
\item $G$ is not a Frobenius complement.
%
\end{enumerate}
\end{theorem}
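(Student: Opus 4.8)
The plan is to prove the four conditions equivalent by establishing $(1)\Leftrightarrow(3)$ and $(2)\Leftrightarrow(3)$ directly from the machinery already assembled, and then treating $(3)\Leftrightarrow(4)$ as the genuinely group-theoretic input. The first two equivalences are essentially bookkeeping on top of Theorem~\ref{thm:rep-radon-ker} and Lemma~\ref{lma:k-or-F}, while the last one is where the theory of Frobenius complements enters and where I expect the real work to lie.

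For $(1)\Leftrightarrow(3)$ I would read off the kernel from the second, ``equivalent'' form of Theorem~\ref{thm:rep-radon-ker}: $\rt f=0$ holds exactly when $\tilde f(\rho)=0$ for every $\rho\in\hat G\setminus FPF(G)$. Since the Fourier transform is a linear bijection (Lemma~\ref{lma:fourier-bij}) and the blocks $\tilde f(\rho)$ vary independently, there is a nonzero $f$ with $\rt f=0$ precisely when at least one block indexed by $FPF(G)$ is free to be nonzero, i.e.\ precisely when $FPF(G)\neq\emptyset$; hence $\rt$ is injective iff $FPF(G)=\emptyset$. For $(2)\Leftrightarrow(3)$ I would invoke Lemma~\ref{lma:k-or-F}: for each irreducible $\rho$ on a space $V\neq\{0\}$ we have $k(\rho)=V$ when $\rho\in FPF(G)$ and $k(\rho)=0$ otherwise, so $k(\rho)=0$ for all $\rho$ is equivalent to no $\rho$ lying in $FPF(G)$, that is, to $FPF(G)=\emptyset$.

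It remains to identify $FPF(G)=\emptyset$ with ``$G$ is not a Frobenius complement'', and this is the crux. First I would reformulate condition $(3)$: a representation $\rho$ lies in $FPF(G)$ exactly when $\rho(x)$ has no nonzero fixed vector for every $x\neq e$, i.e.\ $1$ is not an eigenvalue of any $\rho(x)$ with $x\neq e$, and such a representation is automatically faithful. Because the eigenvalues of a direct sum are the union of the eigenvalues of the summands, a (possibly reducible) representation is fixed-point-free iff each of its irreducible constituents is. Therefore $FPF(G)\neq\emptyset$ is equivalent to the existence of \emph{any} fixed-point-free complex representation of~$G$, and the theorem reduces to the classical fact that a finite group is a Frobenius complement if and only if it admits a fixed-point-free complex representation, which I would cite from the structure theory of Frobenius complements (via the fixed-point-free action of a complement on its Frobenius kernel).

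If a self-contained argument is wanted instead of a citation, I would run both directions explicitly. Given that $G$ is a Frobenius complement, it acts fixed-point-freely on a nontrivial finite group; passing to a characteristic elementary abelian section yields a fixed-point-free representation over some $\F_p$ with $p\nmid\abs{G}$, which I would lift to characteristic zero, noting that under the correspondence between $p'$-roots of unity the matrix $\rho(x)$ acquires the eigenvalue $1$ iff its reduction does, so the lift stays fixed-point-free. Conversely, from $\rho\in FPF(G)$ I would realize $\rho$ over the ring of integers of its character field so that $G$ preserves a lattice~$L$, and reduce modulo a prime not dividing $\abs{G}$ nor any of the nonzero algebraic integers $\det(\rho(x)-I)$, $x\neq e$; the action of $G$ on $L/\mathfrak p L$ is then fixed-point-free, exhibiting $G$ as a Frobenius complement of the Frobenius group $(L/\mathfrak p L)\rtimes G$. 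The main obstacle in either route is exactly this passage between characteristic zero and characteristic~$p$: ensuring the eigenvalue-one-free condition is preserved under reduction and lifting, and that a suitable coprime prime can be chosen.
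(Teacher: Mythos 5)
Your proposal is correct and follows essentially the same route as the paper: the equivalence of (1), (2), (3) is read off from Theorem~\ref{thm:rep-radon-ker}, Lemma~\ref{lma:fourier-bij} and Lemma~\ref{lma:k-or-F}, and (3)$\Leftrightarrow$(4) is the classical characterization of Frobenius complements by fixed-point-free complex representations, which the paper simply invokes as a well-known fact (see Remark~\ref{rmk:frobenius}). The one place you go beyond the paper is the sketched lattice-reduction argument for the implication from $FPF(G)\neq\emptyset$ to $G$ being a Frobenius complement --- precisely the direction for which the paper reports being unable to locate a reference --- and that sketch is the standard, correct one.
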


\begin{proof}
Equivalence of the first three statements is provided by theorem~\ref{thm:rep-radon-ker} and lemma~\ref{lma:fourier-bij}.
Equivalence of the last two statements is a well-known characterization of Frobenius complements.
\end{proof}

\begin{remark}
\label{rmk:frobenius}
The mentioned characterization is a folklore result (see eg. the first paragraph of~\cite{S:frob-char}) in representation theory of finite groups.
The fact that every Frobenius complements have such fixed-point-free representations is given in~\cite[Theorem~18.1v]{P:perm-groups} but we have been unable to find a reference for the reverse direction in the literature.
For more information about Frobenius groups, we refer the reader to~\cite[Section~18]{P:perm-groups}, \cite[Section~V.8]{H:groups} and~\cite[Section~4.6]{RZ:profinite-groups}.
\end{remark}

Unfortunately, Frobenius complements are not easy to recognize.
There are some characterization results (see eg.~\cite{P:perm-groups,S:frob-char}) and the some of the results given in the present paper in Sections~\ref{sec:abelian}--\ref{sec:nonabelian} follow from known properties of Frobenius complements.
We point out that theorem~\ref{thm:rep-radon-inj} provides a new characterization of Frobenius complements in terms of the Radon transform.


\section{Generalizations and variations}
\label{sec:other}

\subsection{Remarks on infinite groups}
\label{sec:Z}

The situation in infinite groups is quite different.
To demonstrate this difference, let us consider the group~$\Z$.
In the definition of geodesics we should replace the finite cyclic groups with the infinite cyclic group~$\Z$.
To make sense of the Radon transform, we should restrict our attention to functions in~$\ell^1(\Z)$.
The Radon transform of a function in~$\ell^1(\Z)$ contains the sums over all cosets of infinite cyclic subgroups of~$\Z$.

The Radon transform on this space is injective.
Indeed, fix any $n\in\Z$ and consider the geodesic $\gamma_{n,m}:\Z\to\Z$, $\gamma_{n,m}(t)=n+mt$, for $m\geq1$.
If $f\in\ell^1(\Z)$, it follows from summability that
\begin{equation}
f(n)
=
\lim_{m\to\infty}\sum_{t\in\Z}f(\gamma_{n,m}(t)).
\end{equation}
Therefore the Radon transform is injective.

It follows from this and lemma~\ref{lma:subgroup} that if an infinite group~$G$ contains~$\Z$ as a subgroup (or, equivalently, has an element of infinite order), then the Radon transform is injective on~$\ell^1(G)$.
Restricting the Radon transform to periodic geodesics removes this phenomenon, but it also removes all geodesics from~$\Z$.

For an example of different kind, consider the (additive) quotient group $G=\Q/\Z$.
Every element has finite order, so the Radon transform is reasonable for any function $f:G\to\C$.
Now it is an elementary calculation to observe that the function $f(x+\Z)=e^{2\pi ix}$ has zero Radon transform.
It follows from this noninjectivity result that all finite subgroups of~$G$ are cyclic; for if there was a noncyclic finite subgroup, the Radon transform would be injective by lemma~\ref{lma:subgroup} and theorem~\ref{thm:abelian}.

We conclude our discussion of infinite groups with the following open problem.
We have observed that for an infinite group~$G$ where every element has finite order, the Radon transform is injective if there is a finite subgroup on which it is injective.
Is the converse true?
Since all elements have finite order, the functions $f:G\to\C$ need not satisfy any summability condition.


\subsection{Discrete flows}
\label{sec:flow}

One can also generalize geodesic flows to discrete settings without group structure.
Let~$X$ be a set.
A curve on~$X$ is a mapping from~$\Z$ (or an integer interval) to~$X$.
Let $s:X\times X\to X$ be a function.
We say that a curve is an $s$\mbox{-}geodesic if for any two adjacent points $a,b\in X$ on a curve, the next point after~$a$ and~$b$ is~$s(a,b)$.

To make this resemble geodesic flow, we want the function~$s$ to satisfy the following: $s(a,b)=b\iff a=b$ and $s(a,b)=c\iff s(c,b)=a$.
These conditions ensure that the reverse of an $s$\mbox{-}geodesic is also an $s$\mbox{-}geodesic and that a nonconstant $s$\mbox{-}geodesic never terminates.

On a group we set $s(a,b)=ba^{-1}b$.
This satisfies the above requirements and gives rise to geodesics as discussed in this article.
If~$X$ is finite, every geodesic is periodic also without group structure.

We can now define the Radon transform corresponding to the successor function~$s$ by replacing the geodesics defined in section~\ref{sec:def-geod} by $s$\mbox{-}geodesics.

A flow satisfying the above requirements need not come from a group structure.
On any set~$X$ one can define a flow by $s(a,b)=a$; all nonconstant geodesics have length two.
If~$X$ were a group, a geodesic through the identity would be a cyclic subgroup and thus~$\abs{X}$ would have to be even, although the flow can be defined for any~$\abs{X}$.

The Radon transform corresponding to this particular flow is injective on any set~$X$ with at least three elements.
To prove this, observe that the Radon transform~$\rt f$ of a function $f:X\to\C$ gives the sum of~$f$ over any pair of points in~$X$.

This flow can also been seen as a discrete time dynamical system on $X\times X$ taking~$(a,b)$ to~$(b,s(a,b))$.
The diagonal $\Delta\subset X\times X$ is precisely the set of stationary points.
Projections of nontrivial orbits of this system to the first (or second) component are precisely the $s$\mbox{-}geodesics in the space~$X$.

\subsection{A variant of the Radon transform}
\label{sec:var}

Above we defined geodesics as cosets of nontrivial cyclic subgroups.
Now we change this definition and require geodesics to be additionally maximal\footnote{This definition was suggested by Peter Michor.}.
A cyclic subgroup is maximal if it is not contained in a larger cyclic subgroup.
We call these new geodesics maximal geodesics and the corresponding transform the maximal Radon transform and denote it by~$\mrt$ instead of~$\rt$.
Note that by lemma~\ref{lma:prime} it suffices to analyze the \emph{minimal} nontrivial geodesics to study the nonmaximal Radon transform.

This definition causes several changes to the theory.
There are (typically strictly) fewer maximal geodesics than geodesics. Thus if~$\mrt$ is injective, so is~$\rt$.
Also, not all subgroups are totally geodesic (w.r.t. maximal geodesics). Totally geodesic subgroups correspond to Lie subgroups. Positive dimensional Lie groups can have both Lie subgroups and discrete subgroups, and this structure has now its discrete counterpart.
Many of the results change, and we illustrate these changes in this section without developing a full theory of the maximal Radon transform.

In the case of Lie groups (with geodesics parameterized by~$S^1$), every maximal abelian subgroup is totally geodesic.
This is also true in the case of maximal discrete geodesics if the subgroup is cyclic or the whole group is abelian, but not necessarily otherwise.
Consider for example the symmetric group~$S_5$ and the subgroup~$H$ generated by~$(1,2)$ and~$(3,4)$.
This is a maximal abelian subgroup, but the cyclic subgroup generated by~$(1,2)$ is maximal in~$H$ but not in~$S_5$.
The subgroup of~$S_5$ generated by~$(1,2)$ is contained in the one generated by $(1,2)(3,4,5)$.

\begin{proposition}
\label{prop:max-square-cyclic}
For any prime $p>1$, the maximal Radon transform is injective in $C_p\times C_p$.
On the other hand, the maximal Radon transform is not injective on any cyclic group.
\end{proposition}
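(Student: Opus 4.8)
The plan is to determine the maximal geodesics explicitly in each of the two cases and then to read off injectivity or noninjectivity from a count of the information that the maximal Radon transform records. In both cases the key is a structural observation about which cyclic subgroups are maximal; once that is settled, the analytic content reduces to results already established.

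For the first claim, I would first observe that in $C_p\times C_p$ every nontrivial element has order~$p$, so every proper nontrivial cyclic subgroup has order~$p$. Since $C_p\times C_p$ is \emph{not} cyclic, none of these order-$p$ subgroups can be contained in a strictly larger cyclic subgroup, and hence each of them is maximal. Consequently the maximal geodesics on $C_p\times C_p$ are exactly the cosets of the order-$p$ subgroups, that is, precisely the length-$p$ geodesics already appearing in lemma~\ref{lma:square}. The maximal Radon transform~$\mrt$ therefore records the very same data as~$\rt_p$, and since~$\rt_p$ is injective on $C_p\times C_p$ by lemma~\ref{lma:square}, so is~$\mrt$.

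For the second claim, let $G=C_n$ with $n\geq2$. A cyclic group has a unique maximal cyclic subgroup, namely~$G$ itself, and~$G$ has only one coset in~$G$. Thus the only maximal geodesic is the whole group: for any homomorphism~$\gamma$ whose image is all of~$G$, the quantity $\mrt f(x,\gamma)=\sum_{t}f(x\gamma(t))$ equals a fixed nonzero multiple of $\sum_{y\in G}f(y)$, independently of~$x$ (compare~\eqref{eq:group-sum}, noting that left translation by~$x$ merely permutes~$G$). Hence~$\mrt f$ carries only the single number $\sum_{y\in G}f(y)$. Since $\dim\C^G=n\geq2$, any nonzero function~$f$ with vanishing total sum lies in $\ker\mrt$, so~$\mrt$ is not injective for dimensional reasons.

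The only point requiring genuine care is the identification of the maximal cyclic subgroups — in particular, verifying that all order-$p$ subgroups of $C_p\times C_p$ are genuinely maximal (which is exactly where noncyclicity of the ambient group enters) and that a nontrivial cyclic group has no maximal cyclic subgroup other than itself. Once this bookkeeping is in place, the first claim is immediate from lemma~\ref{lma:square} and the second is a one-line dimension count, so I do not expect any serious obstacle beyond correctly pinning down the set of maximal geodesics.
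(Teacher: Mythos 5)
Your proof is correct and follows essentially the same route as the paper: for $C_p\times C_p$ you observe that all geodesics are maximal and invoke lemma~\ref{lma:square}, and for cyclic groups you use the fact that the whole group is the only maximal cyclic subgroup, so any nonzero zero-average function lies in $\ker\mrt$ --- which is exactly the second argument the paper gives (the paper's first argument instead deduces noninjectivity of $\mrt$ from that of $\rt$ via lemma~\ref{lma:cyclic}).
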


\begin{proof}
The proof of the first claim is the same as that of lemma~\ref{lma:square} since all geodesics are maximal.

The second one follows from the fact that if~$\mrt$ were injective,~$\rt$ would also be, contradicting lemma~\ref{lma:cyclic}.
Another way to see this is that there is only one maximal cyclic subgroup, namely the group itself, so any function with zero average is in the kernel of the maximal Radon transform.
\end{proof}

Recall that if~$G$ is a finite group and~$p$ is a prime, then a Sylow $p$\mbox{-}subgroup of~$G$ is a maximal subgroup of~$G$ such that the order of each element is a power of~$p$.
A Sylow subgroup is a Sylow $p$\mbox{-}subgroup for some prime~$p$.

\begin{proposition}
\label{prop:max-cyclic-factor}
Consider the finite group $G=G_1\times G_2$ where~$G_1$ is cyclic and~$\abs{G_1}$ and~$\abs{G_2}$ are coprime.
The maximal Radon transform is not injective on~$G$.

In particular, if~$G$ is a finite abelian group with a cyclic Sylow subgroup, the maximal Radon transform is not injective on it.
\end{proposition}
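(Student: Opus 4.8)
The plan is to construct an explicit nonzero function in the kernel of $\mrt$, exploiting the fact that cyclic subgroups of a group with a cyclic coprime factor have a restricted structure. First I would identify the maximal cyclic subgroups of $G = G_1 \times G_2$. Because $\abs{G_1}$ and $\abs{G_2}$ are coprime, every element $(x_1,x_2)\in G$ has order $\mathrm{ord}(x_1)\cdot\mathrm{ord}(x_2)$, and the cyclic subgroup it generates decomposes as $\langle x_1\rangle\times\langle x_2\rangle$. Since $G_1$ is cyclic, the subgroups $\langle x_1\rangle$ range over the (totally ordered) lattice of subgroups of $G_1$; a cyclic subgroup $\langle x_1\rangle\times\langle x_2\rangle$ is maximal precisely when $\langle x_1\rangle = G_1$ and $\langle x_2\rangle$ is a maximal cyclic subgroup of $G_2$. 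The key observation is that \emph{every} maximal cyclic subgroup of $G$ therefore contains the full factor $G_1\times\{e\}$ as a ``slice'' in its first coordinate.

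The main step is to build a function supported on $G_1$ that integrates to zero along $G_1$ and is invisible to maximal geodesics. By proposition~\ref{prop:max-square-cyclic}, or rather by the reasoning behind lemma~\ref{lma:cyclic}, there is a nonzero $g:G_1\to\C$ whose sum over every coset of every nontrivial cyclic subgroup of $G_1$ vanishes; in particular $\sum_{x_1\in G_1}g(x_1)=0$. I would then define $f:G\to\C$ by $f(x_1,x_2)=g(x_1)$ if $x_2=e$ and $f(x_1,x_2)=0$ otherwise, or more symmetrically by tensoring $g$ against a suitable function on $G_2$. Given any maximal geodesic, i.e. a coset of a maximal cyclic subgroup $H=G_1\times\langle x_2\rangle$, the sum of $f$ over that coset factors: for each fixed second coordinate the inner sum runs over a full coset of $G_1$ inside the first factor, and since $g$ has mean zero over $G_1$, the total vanishes. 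Hence $\mrt f = 0$ while $f\neq 0$, proving noninjectivity.

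For the second assertion I would specialize. If $G$ is finite abelian with a cyclic Sylow $p$-subgroup $P$, then by the primary decomposition $G\cong P\times G'$ where $G'$ collects the remaining Sylow subgroups; then $\abs{P}$ and $\abs{G'}$ are coprime by construction and $P$ is cyclic, so the first part applies with $G_1=P$ and $G_2=G'$.

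The step I expect to be the main obstacle is verifying that a maximal cyclic subgroup of $G$ really must project onto all of $G_1$, so that the mean-zero cancellation is guaranteed on \emph{every} maximal geodesic rather than merely on the non-maximal ones. This is where the coprimality and cyclicity hypotheses are essential: they force the first-coordinate projection of a maximal cyclic subgroup to be a maximal subgroup of the cyclic group $G_1$, which (again by cyclicity) is $G_1$ itself. Once this structural fact is pinned down the remaining computation is the same factorization argument used for direct products in theorem~\ref{thm:prod}.
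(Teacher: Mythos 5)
Your proposal is correct and follows essentially the same route as the paper: both hinge on showing that every maximal cyclic subgroup of $G$ projects onto all of $G_1$ (you obtain this from the decomposition $\langle(x_1,x_2)\rangle=\langle x_1\rangle\times\langle x_2\rangle$ under coprimality, while the paper argues via an explicit generator and a Chinese-remainder choice of exponent), and then both kill every maximal geodesic using a mean-zero function on $G_1$. The only cosmetic difference is your kernel element supported on $G_1\times\{e\}$ versus the paper's pullback $f\circ\pi_1$; both vanish on maximal geodesics for the same reason.
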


\begin{proof}
Let $\pi_i:G\to G_i$, $i=1,2$, be the two natural projections.
We will first show that if~$C$ is a maximal cyclic subgroup of~$G$, then $\pi_1(C)=G_1$.

Suppose~$C$ is a cyclic subgroup of~$G$ so that $\pi_1(C)\neq G_1$.
We will show that it is not maximal.
If~$g$ is a generator of~$C$ and~$g_1$ a generator of~$G_1$, we know that $g_1\notin\pi_1(C)$.
But~$g_1$ generates~$G_1$, so there is an integer~$m$ so that $g_1^m=\pi_1(g)$.
Let~$n_1$ and~$n_2$ be the orders of~$g_1$ and~$\pi_2(g)$, respectively.
Since~$n_1$ and~$n_2$ are coprime by assumption, there is an integer~$c$ so that $c\equiv m\pmod{n_1}$ and $c\equiv1\pmod{n_2}$.
If we let $h=(g_1,\pi_2(g))$, we have $h^m=g$.
But~$h$ is not contained in the cylcic subgroup~$C$ generated by~$g$, so~$C$ is not maximal.

There is a nonzero function $f:G_1\to\C$ with zero average (this is essentially the second claim of proposition~\ref{prop:max-square-cyclic}).
Let us define $g:G\to\C$ by $g=f\circ\pi_1$.
Since maximal geodesics have surjective projections to~$G_1$, the sum of~$g$ over every maximal geodesic is a multiple of the sum of~$f$ over~$G_1$ and thus zero.
Therefore~$g$ is a nonzero function but $\mrt g=0$.

The second claim follows from Kronecker's decomposition theorem and the first claim.
\end{proof}

\begin{proposition}
\label{prop:max-cyclic-subgroups}
Let~$G$ be a finite group and let~$S$ denote the set of its maximal cyclic subgroups.
Then the maximal Radon transform is not injective on~$G$ if
\begin{equation}
\sum_{H\in S}\abs{H}^{-1}
<
1+\frac{\abs{S}-1}{\abs{G}}.
\end{equation}
\end{proposition}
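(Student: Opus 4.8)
The plan is to reproduce the dimension-counting argument of Proposition~\ref{prop:cyclic-subgroups}, now with the maximal cyclic subgroups in~$S$ playing the role that the prime-order cyclic subgroups played there. First I would identify the maximal geodesics explicitly: each $H\in S$ of order~$\abs{H}$ comes with an injective homomorphism $C_{\abs{H}}\to G$ onto~$H$, and the associated maximal geodesics are the left cosets $xH$. The corresponding entries of~$\mrt f$ are the coset sums $\sum_{y\in xH}f(y)$, each depending only on the coset~$xH$, so~$H$ contributes exactly $\abs{G}/\abs{H}$ distinct measurements to~$\mrt f$.

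Next I would bound the dimension of the image of~$\mrt$. For a fixed $H\in S$ the $\abs{G}/\abs{H}$ coset sums are not independent: because the cosets partition~$G$, their sum equals $\sum_{y\in G}f(y)$, which is the single relation supplied by~\eqref{eq:group-sum}. Crucially, this group total is the \emph{same} quantity for every $H\in S$, so it should be counted only once across all subgroups. Hence~$\mrt f$ carries at most $1+\sum_{H\in S}(\abs{G}/\abs{H}-1)$ independent numbers, the isolated~$1$ accounting for the one shared group sum.

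Finally I would match this count against the stated hypothesis. A direct rearrangement gives
\[
1+\sum_{H\in S}\Big(\frac{\abs{G}}{\abs{H}}-1\Big)<\abs{G}
\quad\Longleftrightarrow\quad
\sum_{H\in S}\abs{H}^{-1}<1+\frac{\abs{S}-1}{\abs{G}},
\]
so under the assumed inequality~$\mrt$ sends the $\abs{G}$-dimensional space~$\C^G$ into a space of strictly smaller dimension and must therefore have a nontrivial kernel.

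Since this is a transcription of the earlier proof, I do not anticipate a genuine obstacle. The only place demanding care is the bookkeeping of the relations: one must confirm that, because each~$H$ is used with its full (injective) homomorphism, the measurements are honest coset sums with no multiplicity, and that the common group total of~\eqref{eq:group-sum} is subtracted a single time rather than once per subgroup, as otherwise the bound — and hence the resulting inequality — would come out wrong.
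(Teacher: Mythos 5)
Your proposal is correct and is exactly the paper's argument: the paper's proof consists of the single remark that it is ``essentially the same as that of Proposition~\ref{prop:cyclic-subgroups}'', and you have written out precisely that dimension count with maximal cyclic subgroups in place of the prime-order ones, including the correct single counting of the shared group total from~\eqref{eq:group-sum}. No gaps.
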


\begin{proof}
The proof is essentially the same as that of proposition~\ref{prop:cyclic-subgroups}.
\end{proof}

The following proposition is a discrete analogue of~\cite[Proposition~2.4]{I:lie-radon}.
When compared to the case of Lie groups,~$H$ should be understood as a discrete subgroup.

\begin{proposition}
\label{prop:quotient}
Let~$G$ be a finite group and~$H$ such a subgroup that no cyclic subgroup of~$H$ is a maximal subgroup of~$G$.
If the maximal Radon transform is injective on~$G$, then the (nonmaximal) Radon transform is injective on the quotient~$G/H$.
\end{proposition}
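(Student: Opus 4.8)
The plan is to deduce injectivity of~$\rt$ on the quotient from injectivity of~$\mrt$ on~$G$ by pulling functions back along the quotient projection $\pi\colon G\to G/H$. So I would start from a function $f\colon G/H\to\C$ with $\rt f=0$ and aim to prove $f=0$. Set $\pi^*f=f\circ\pi\colon G\to\C$. For any homomorphism $\gamma\colon C_n\to G$ and any $x\in G$, the same direct computation already carried out in section~\ref{sec:nonabelian} gives, for every length~$n$,
\begin{equation}
\rt_n^G(\pi^*f)(x,\gamma)=\rt_n^{G/H}f(\pi(x),\pi\circ\gamma).
\end{equation}
When~$\gamma$ is a maximal geodesic, the left-hand side is a component of the maximal Radon transform $\mrt^G(\pi^*f)$. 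Thus, if I can show that the right-hand side vanishes for every maximal~$\gamma$, I will have $\mrt^G(\pi^*f)=0$, and injectivity of~$\mrt$ on~$G$ together with surjectivity of~$\pi$ will then force $f=0$.

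The crux is to check that $\pi\circ\gamma$ is a genuine, nontrivial geodesic on~$G/H$ whenever~$\gamma$ is maximal. This is exactly the step where the naive argument for the full Radon transform collapses, as explained in section~\ref{sec:nonabelian}: $\pi\circ\gamma$ is trivial precisely when $\gamma(C_n)\subseteq H=\ker\pi$. Here the hypothesis on~$H$ is what rescues the argument. If~$\gamma$ is maximal, its image $\gamma(C_n)$ is a maximal cyclic subgroup of~$G$; were it contained in~$H$, it would be a cyclic subgroup of~$H$ that is maximal cyclic in~$G$, contradicting the assumption that no cyclic subgroup of~$H$ is a maximal cyclic subgroup of~$G$. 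Hence $\gamma(C_n)\not\subseteq H$, some $\gamma(t)$ lies outside $\ker\pi$, and $\pi\circ\gamma$ is nontrivial and so a geodesic on~$G/H$.

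Once this is established the rest is bookkeeping. Since $\rt f=0$ means $\rt_nf=0$ over every nontrivial homomorphism $C_n\to G/H$, we get $\rt_n^{G/H}f(\pi(x),\pi\circ\gamma)=0$ for each maximal~$\gamma$ and each $x\in G$, whence $\mrt^G(\pi^*f)=0$ and then $\pi^*f=0$, i.e.\ $f=0$. I expect the only genuine obstacle to be the maximality step above; everything else is routine. It is worth stressing that passing from~$\rt$ to~$\mrt$ on~$G$ is essential: for the full transform one cannot rule out geodesics contained in~$H$ (the cyclic subgroups of~$H$), and for such a~$\gamma$ the identity degenerates to $\rt_n^G(\pi^*f)(x,\gamma)=nf(\pi(x))$, which carries no information forcing~$f$ to vanish.
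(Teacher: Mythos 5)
Your proposal is correct and follows essentially the same route as the paper's own proof: pull $f$ back along $\pi$, use the identity $\rt^G(\pi^*f)(x,\gamma)=\rt^{G/H}f(\pi(x),\pi\circ\gamma)$ restricted to maximal geodesics, and observe that the hypothesis on $H$ is exactly what guarantees $\pi\circ\gamma$ is a nontrivial homomorphism. The key step you isolate (that a maximal cyclic subgroup cannot lie inside $H$, so $\pi\circ\gamma$ survives as a genuine geodesic on $G/H$) is the same contradiction argument the paper gives.
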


Before embarking on the proof, let us take a moment to put the statement in some context since the situation is quite delicate.
The analogous result is true for a Lie group~$G$ and a discrete normal subgroup~$H$ where both Radon transforms are the usual Radon transform on Lie groups~\cite[Proposition~2.4]{I:lie-radon}.
Proposition~\ref{prop:quotient} is not symmetric: we assume injectivity of~$\mrt^G$ (which is a stronger condition than injectivity of~$\rt^G$) and we obtain injectivity of~$\rt^{G/H}$ (weaker than injectivity of~$\mrt^{G/H}$).
We have again indicated the underlying group by a superscript in the transform.
As discussed in the beginning of section~\ref{sec:nonabelian}, the statement is false if we consider~$\rt^G$ and~$\rt^{G/H}$.

The statement is also false if we consider~$\mrt^G$ and~$\mrt^{G/H}$.
Consider for example the group $G=C_6\times C_6$ on which maximal Radon transform is injective\footnote{We do not have an elegant proof for this fact, but it is easy to check computationally. There are 12 maximal cyclic subgroups and each has 6 distinct cosets. A function $f:G\to\C$ can be regarded as a vector in~$\C^{36}$ and the question is whether the 72 conditions (sums over geodesics) determine it uniquely. The rank of the corresponding $36\times72$ matrix turns out to be 36.}.
Take any subgroup~$H$ with two elements; it will satisfy the assumption of proposition~\ref{prop:quotient} because all maximal geodesics have length six.
Now the maximal Radon transform is not injective on $G/H\approx C_3\times C_6$ by proposition~\ref{prop:max-cyclic-factor}.

\begin{proof}[{Proof of proposition~\ref{prop:quotient}}]
Suppose $f:G/H\to\C$ satisfies $\rt^{G/H}f=0$.
If $\pi:G\to G/H$ is the natural projection, we want to show that $\pi^*f:G\to\C$ (and thus~$f$ itself) vanishes by considering its maximal Radon transform.
To this end, let $\gamma:C_n\to G$ be any homomorphism whose image is a maximal cyclic subgroup and let $x\in G$.
Then
\begin{equation}
\mrt^G(\pi^*f)(x,\gamma)
=
\rt^G(\pi^*f)(x,\gamma)
=
\rt^{G/H}f(\pi(x),\pi\circ\gamma).
\end{equation}
Suppose we know that $\pi\circ\gamma:C_n\to G/H$ is nontrivial.
Then we get $\pi^*f=0$ using the assumption $\rt^{G/H}f=0$ and injectivity of~$\mrt^G$.

Let us then show that $\pi\circ\gamma$ is indeed nontrivial.
If~$g$ is a generator of~$C_n$, we want to show that $\pi(\gamma(g))\neq e$.
If this were not the case, we would have $\gamma(g)\in H$, whence $\gamma(C_n)\subset H$.
We have assumed that~$\gamma(C_n)$ is a maximal cyclic subgroup of~$G$ and that~$H$ cannot contain such a subgroup.
This contradiction concludes the proof.
\end{proof}

\section*{Acknowledgements}

This work was supported by the Academy of Finland (Centre of Excellence in Inverse Problems Research) and an ERC Starting Grant.
The author wishes to express his gratitude to the help received at MathOverflow.
The folklore characterization of Frobenius complements via representation theory would never have made it here without discussions with Frieder Ladisch.

%
%
%
%


%

\bibliographystyle{abbrv}
\bibliography{ip}

\end{document}